\DeclarePairedDelimiter\abs{\lvert}{\rvert}
\DeclarePairedDelimiter\norm{\lVert}{\rVert}
\let\oldabs\abs
\def\abs{\@ifstar{\oldabs}{\oldabs*}}
\let\oldnorm\norm
\def\norm{\@ifstar{\oldnorm}{\oldnorm*}}
\newtheorem{theorem}{Theorem}
\newtheorem{lemma}[theorem]{Lemma}
\newtheorem{conjecture}[theorem]{Conjecture}
\theoremstyle{definition}
\theoremstyle{remark}
\numberwithin{theorem}{section}
\numberwithin{proposition}{section}
\numberwithin{lemma}{section}
\numberwithin{corollary}{section}
\numberwithin{equation}{section}
\numberwithin{conjecture}{section}
\setlist[enumerate,1]{before=}
\newcommand{\N}{\mathbb{N}}
\newcommand{\Z}{\mathbb{Z}}
\renewcommand{\bmod}[1]{\ ( \mathrm{mod} \, #1 )}
\title{Sums of proper divisors with missing digits}
\author{K\"{u}bra Benl\.{i}, Giulia Cesana, C\'{e}cile Dartyge, Charlotte Dombrowsky, \\ and Lola Thompson}
\address{Department of Mathematics and Computer Science, University of Lethbridge, 4401 University Drive, Lethbridge, Alberta, T1K 3M4 Canada}
\email{kubra.benli@uleth.ca}
\address{University of Cologne, Department of Mathematics and Computer Science, Division of Mathematics, Weyertal 86-90, 50931 Cologne, Germany}
\email{gcesana@math.uni-koeln.de}
\address{Institut \'{E}lie Cartan, Universit\'{e} de Lorraine, BP 70239, 54506 Vand\oe uvre-l\`{e}s-Nancy Cedex, France}
\email{cecile.dartyge@univ-lorraine.fr}
\address{Universiteit Leiden, Science Mathematisch Instituut, Mathematisch Instituut, Niels Bohrweg 1, 2333 CA Leiden, Netherlands}
\email{c.k.l.dombrowsky@math.leidenuniv.nl}
\address{Mathematics Institute, Utrecht University, Hans Freudenthalgebouw, Budapestlaan 6, 3584 CD Utrecht, Netherlands}
\email{l.thompson@uu.nl}
\begin{document}

\begin{abstract}
Let $s(n)$ denote the sum of proper divisors of an integer $n$. In 1992, Erd\H{o}s, Granville, Pomerance, and Spiro (EGPS) conjectured that if $\mathcal{A}$ is a set of integers with asymptotic density zero then $s^{-1}(\mathcal{A})$ also has asymptotic density zero. In this paper we show that the EGPS conjecture holds when $\mathcal{A}$ is taken to be a set of integers with missing digits. In particular, we give a sharp upper bound for the size of this preimage set. We also provide an overview of progress towards the EGPS conjecture and survey recent work on sets of integers with missing digits. 
\end{abstract}

\maketitle

\section{Introduction}

Let $s(n)$ denote the \textit{sum-of-proper-divisors function}, i.e., $s(n) = \sum_{d \mid n, d < n} d.$ The function $s(n)$ has been studied since the time of the ancient Greeks. Indeed, the \textit{perfect} numbers are those integers $n$ for which $s(n) = n$. The Greeks also spoke of integers as being \textit{deficient} if $s(n) <n$ and \textit{abundant} if $s(n) > n$.

%\textbf{KB: Need the definition of $\sigma(n)$ somewhere, as well. GC: I did this at the beginning of the proof of Theorem \ref{theorem: upper bound set of missing digits}, where $\sigma(n)$ first showed up. KB: I suggest that we give the definition before we start the proof. This could be done as a short subsection. Or we can mention it here as deficient and abundant numbers are easily defined in terms of $\sigma(n)$.}

It is natural to wonder how the function $s$ behaves when applied to various inputs. One surprising feature is that $s$ can map sets of asymptotic density\footnote{If $\mathcal{S}$ is a subset of the natural numbers, then the \textit{asymptotic density} of $\mathcal{S}$ is given by $$\lim_{x \rightarrow \infty} \frac{1}{x} \#\{n \leq x: n \in \mathcal{S}\},$$ provided that this limit exists.} zero to sets with positive asymptotic density. For example, consider the set $\mathcal{A}$ as the set of numbers $pq$, where $p$ and $q$ are distinct primes. 
%For example, if $\mathcal{A}$ is the set of numbers $pq$, where $p$ and $q$ are distinct primes, then $\mathcal{A}$ has asymptotic density zero but $s(\mathcal{A})$ has asymptotic density $1/2$. Indeed, 
Then $\#(\mathcal{A}\cap [1,x])\ll x(\log\log x)/\log x$ implies that $\mathcal{A}$ has asymptotic density zero. Moreover, Montgomery and Vaughan \cite{MontgomeryVaughan75} showed that the number of even integers less than $x$ which are not the sum of two primes is at most $ x^{1-\delta}$ for some $\delta>0$ and recently, Pintz \cite{Pintz} proved  that $\delta =0.28$ is an admissible explicit value. Since  $s(pq)=p+q+1$ for distinct primes $p$ and $q$, it follows that $s(\mathcal{A})$
contains almost all odd numbers, which allows one to conclude that $s(\mathcal{A})$ has asymptotic density $1/2$.

%{\bf KB: I removed the comments, and changed the order of sentences a bit here. There were some flaws in the argument. Feel free to fix what I wrote.}
%Let $E(X)$ be exceptional set for the Goldbach problem, that is  the set of the even integers less than $X$ which are not the sum of two primes. Montgomery and Vaughan \cite{MontgomeryVaughan75} proved that there exists $\delta >0$ such that $E(X)\ll X^{1-\delta}.$ Since $s(pq)=1+p+q$ for all primes $p,q$, we deduce that the set $s(\mathcal{A})$ contains almost all odd integers and is thus of asymptotic density $1/2$ whereas $|\{ pq\le x\}| \sim x(\log\log x)/\log x$.

%\textbf{CDa: I have inserted a small proof. We should perhaps give a reference for an explicit exponent $\delta$ in the result of Montgomery and Vaughan.
%KB: Thank you Cecile, I am checking papers citing this result for an explicit constant, but do we need that? I think depending on how we present this proof, we can give more details or choose to be concise. We can discuss this in the meeting. 
%CDa: you are right, we don't really need to have an explicit constant. CDo: I think the proof here is good. To be honest, there are still some steps that are not natural to me; but I guess that I could look them up in an introductory book and I don't think the purpouse of the paper should be to proof elementary statements.}

In addition to studying the images of various sets $\mathcal{A}$ under the function $s$, one can also consider what happens when taking $s^{-1}(\mathcal{A}).$ The preimages can also be a bit surprising. For example, there are positive density sets of integers whose preimages under $s$ have asymptotic density zero. In fact, Erd\H{o}s \cite{Erdos73} demonstrated  that there are sets $\mathcal{A}$ of positive asymptotic density for which $s^{-1}(\mathcal{A})$ is empty.

In 1992, Erd\H{o}s, Granville, Pomerance, and Spiro made the following conjecture that we will later refer to as the EGPS Conjecture.

\begin{conjecture}\cite[Conjecture 4]{egps}
%{\color{red}[Erd\H{o}s, Granville, Pomerance, Spiro, 1992, \cite{egps}]} 
%{\bf GC: I think it would be nicer to use the citation \cite[Conjecture 4]{egps}, since the year and the names are mentioned in the sentence before this conjecture already and since this would be more consistent. KB: I agree, I changed that keeping the previous version in percent sign. }
Let $\mathcal{A}$ be a set of integers with asymptotic density zero. Then $s^{-1}(\mathcal{A})$ also has asymptotic density zero. \end{conjecture}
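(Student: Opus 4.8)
The plan is to bound $N(x) := \#\{n \le x : s(n) \in \mathcal{A}\}$ and prove that $N(x) = o(x)$. Since $s(n) < \sigma(n) \ll n \log\log n$, every $n \le x$ counted by $N(x)$ has $s(n) \in [1, y]$ with $y \asymp x \log\log x$, and the density-zero hypothesis gives $\#(\mathcal{A} \cap [1, y]) = o(y)$. The identity $N(x) = \sum_{m \in \mathcal{A},\, m \le y} \#\{n \le x : s(n) = m\}$ cannot be used fibrewise, because the fibers of $s$ are erratic and can be large: the relation $s(pq) = p + q + 1$ ties the fiber over an odd $m$ to the Goldbach representations of $m - 1$, which is exactly the phenomenon behind the $pq$ example in the introduction. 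The first step is therefore to discard the $o(x)$ integers $n \le x$ that are not \emph{$s$-normal}---those with an abnormal count of prime factors, a large squarefull part, or an exceptional large prime dividing $\sigma(n)$---so that on the surviving set the multiplicative data governing $s(n)$ is controlled.

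The mechanism one hopes to exploit is a \emph{spreading} property of $s$. Writing $s(n) = n\bigl(\sigma(n)/n - 1\bigr)$ and recalling that $\sigma(n)/n$ has a continuous limiting distribution (Davenport--Schoenberg), one expects the values $s(n)$, as $n$ ranges over a dyadic block, to be close to equidistributed in short intervals and in arithmetic progressions. The target is a thin-preimage estimate of the form $\#\{n \le x : s(n) \in I\} \ll (\abs{I}/y)\, x$ for intervals $I \subseteq [1, y]$, uniformly for $\abs{I}$ as small as the method allows, together with its analogue over progressions to moderate moduli. Such a bound would assert that $s$ does not overconcentrate its preimages on any thin target.

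The hard part---and the reason the statement is the open EGPS Conjecture rather than a theorem---is that even a full spreading property of this kind does not, by itself, close the argument, and proving any such property uniformly is itself out of reach. Two difficulties compound. First, no pointwise fiber bound holds, so preimages must be controlled only in aggregate, which already demands deep input. Second, and decisively, a density-zero set $\mathcal{A}$ need not be concentrated: it may place an element in almost every short block of $[1, y]$ while still satisfying $\#(\mathcal{A} \cap [1, y]) = o(y)$, so local equidistribution estimates cannot simply be summed over a covering to yield $o(x)$. Ruling out a conspiracy between such a spread-out $\mathcal{A}$ and the multiplicative structure of $\sigma(n) - n$ requires control of the \emph{joint} distribution of $n$ and $s(n)$ far beyond present technology. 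This is precisely why the present paper does not attack the conjecture in full generality but instead imposes a missing-digit structure on $\mathcal{A}$: that structure supplies a nontrivial Fourier-uniformity (exponential-sum) bound for the indicator of the target set, which compensates for the general distributional information about $s$ that remains unavailable.
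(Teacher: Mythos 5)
You have not produced a proof, and none should be expected: the statement you were given is Conjecture 4 of Erd\H{o}s--Granville--Pomerance--Spiro, which the paper explicitly records as open (``The full EGPS Conjecture is still open today''). The paper contains no proof of it to compare against; its theorems establish only special cases, principally the missing-digit case of Theorem 1.8. Your write-up correctly recognizes this status, and your diagnosis of the obstructions is essentially sound: fibers of $s$ admit no useful pointwise bound (the $s(pq)=p+q+1$ connection to Goldbach representations is exactly the right example), and a density-zero $\mathcal{A}$ can be spread across $[1,y]$ so that no local equidistribution statement for $s$ can be summed to give $o(x)$. Your reduction observations (that $s(n)\ll n\log\log n$ confines the target to $[1,y]$ with $y\asymp x\log\log x$, and that one may discard $o(x)$ abnormal $n$) are correct but, as you say, do not close the argument. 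So the ``gap'' here is total and unavoidable: what you wrote is an accurate account of why the problem is open, not a proof, and it should not be graded as one.

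One substantive correction to how you characterize the paper's actual strategy for its special case: the missing-digit structure is \emph{not} exploited through a Fourier-uniformity or exponential-sum bound for the indicator of $\mathcal{W}_{\mathcal{D}}$ (that is Maynard's technology, which the paper invokes only in its Section 5 remarks on lower bounds). The proof of Theorem 1.8 is elementary in comparison and proceeds by a congruence mechanism: one shows via Lemma 1.9 (Rankin's method applied to a multiplicative function built from primes $p\equiv -1 \bmod g^{\ell}$, plus Siegel--Walfisz) that $g^{k}\mid\sigma(n)$ for all but $O(x\exp(-(\log_2 x)^{\gamma}))$ of the $n\le x$, with $k\asymp(\log_2 x)^{\gamma}$; and on that set $n=\sigma(n)-s(n)\equiv -s(n)\bmod g^{k}$, so the digit restriction on $s(n)$ forces $n$ into at most $|\mathcal{D}|^{k}$ residue classes modulo $g^{k}$, each contributing about $x/g^{k}$. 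The saving is the ratio $(|\mathcal{D}|/g)^{k}$, i.e.\ anti-concentration of $n$ itself in progressions, with no distributional input about $s$ beyond the divisibility of $\sigma(n)$. This is worth internalizing because it shows the special case is resolved by transferring structure from $s(n)$ back to $n$ through $\sigma$, sidestepping precisely the joint-distribution difficulties you correctly identify as blocking the general conjecture.
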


The full EGPS Conjecture is still open today. However, it has been confirmed for certain specific sets $\mathcal{A}$. The following theorems give some examples of these sets. First, Pollack took $\mathcal{A}$ to be the set of primes.

%{\bf GC: I think here we should add an additional sentence. Something like ''The following theorems give some examples.``. CDo: I agree with Giulia.}

\begin{theorem}\cite[Theorem 1.11]{PollackArithmeticProperties} 
If $\mathcal{A}$ is the set of primes then $\# (s^{-1}(\mathcal{A})\cap [1,x]) = O\left(\frac{x}{\log x}\right)$, for all $x\geq 2$.
\end{theorem}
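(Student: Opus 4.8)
The plan is to count $n\le x$ with $s(n)=\sigma(n)-n$ prime by first making two parity/size reductions and then peeling off the largest prime factor, which converts the primality of $s(n)$ into the primality of a linear form evaluated at a prime.

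First I would dispose of the easy cases. Since $s(n)=2$ has no solution, any prime value of $s(n)$ is odd, which forces $\sigma(n)\not\equiv n\pmod 2$. Writing $n=2^a k$ with $k$ odd, one has $\sigma(n)$ odd exactly when $k$ is a perfect square, and examining the two ways $s(n)$ can be odd shows that either $n$ is odd, or $n$ is even with square odd part (so $n=2^a j^2$); the latter set contributes only $O(\sqrt{x})$. Thus, up to $O(\sqrt{x})$, I may assume $n$ is odd. Next I would discard the $n$ with $P^+(n)\le \log x$ (the $\log x$-smooth integers up to $x$ number $\Psi(x,\log x)=x^{o(1)}$) and the $n$ divisible by $\ell^2$ for some prime $\ell>\log x$ (at most $\sum_{\ell>\log x} x/\ell^2\ll x/\log x$ of these). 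In the surviving main case, $p:=P^+(n)$ satisfies $p>\log x$ and $p\,\|\,n$, so I may write $n=pm$ with $P^+(m)<p$ and $m<x/\log x$.

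The key identity is $s(pm)=(p+1)\sigma(m)-pm=s(m)\,p+\sigma(m)$ whenever $p\nmid m$, so that ``$s(n)$ prime'' becomes the condition that the linear form $L_m(p):=s(m)\,p+\sigma(m)$ is prime. Because $\sigma(m)=s(m)+m$, we have $\gcd(s(m),\sigma(m))=\gcd(s(m),m)$; if this gcd exceeds $1$ then $L_m(p)$ is always a multiple of it exceeding it, hence composite, so those $m$ contribute nothing and may be dropped. For each remaining fixed $m$, I would apply an upper-bound (Selberg or Brun) sieve to count primes $p\in(P^+(m),\,x/m]$ for which $L_m(p)$ is simultaneously prime; this two-dimensional sieve yields a bound $\ll \frac{x/m}{(\log(x/m))^2}\,\mathfrak{S}(m)$, where the singular series satisfies $\mathfrak{S}(m)\ll \frac{s(m)\sigma(m)}{\phi(s(m)\sigma(m))}$.

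Finally I would sum over $m$. The dominant contribution comes from small $m$, where $\log(x/m)\asymp\log x$ and the total is $\ll \frac{x}{(\log x)^2}\sum_{m}\frac{\mathfrak{S}(m)}{m}$; for large $m$ the constraint $P^+(m)<p\le x/m$ forces $m$ to be $(x/m)$-smooth, and the decay of Dickman's function $\rho$ renders that range negligible. The two places where I expect the real work to lie are (i) proving $\sum_{m\le x}\mathfrak{S}(m)/m\ll \log x$, which reduces to showing that $\sigma(m)/\phi(\sigma(m))$ and $s(m)/\phi(s(m))$ are bounded on average over $m$, and (ii) the smooth-number bookkeeping needed to tame the large-$m$ tail. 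Putting these together gives $\#(s^{-1}(\mathcal{A})\cap[1,x])\ll x/\log x$, which matches the density of the primes and is therefore sharp.
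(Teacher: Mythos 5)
First, a caveat on the comparison itself: the paper you are reading does not prove this statement. It is quoted from Pollack's article \cite{PollackArithmeticProperties} as part of the survey in the introduction, so there is no in-paper proof to measure your attempt against. That said, your outline follows the standard route for results of this type: the parity reduction (using that $s(n)=2$ is impossible and that $\sigma(n)$ is odd exactly for $n=2^a k^2$), peeling off $p=P^+(n)$, the identity $s(pm)=s(m)p+\sigma(m)$ for $p\nmid m$, and an upper-bound sieve over $p$ for fixed $m$. All of those reductions are sound, and the exceptional sets you discard are of acceptable size.

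There is, however, a genuine gap at the step you yourself flag as ``(i)''. You assert that $\sum_{m}\mathfrak{S}(m)/m\ll\log x$ reduces to showing that $\sigma(m)/\varphi(\sigma(m))$ and $s(m)/\varphi(s(m))$ are bounded on average. The first of these is false. For every fixed squarefree $d$ one has $d\mid\sigma(m)$ for almost all $m$ (Watson's theorem, cited in this paper), so
$$\frac{1}{M}\sum_{m\le M}\frac{\sigma(m)}{\varphi(\sigma(m))}=\sum_{d}\frac{\mu^2(d)}{\varphi(d)}\cdot\frac{\#\{m\le M:\,d\mid\sigma(m)\}}{M}\ge(1+o(1))\sum_{d\le D}\frac{\mu^2(d)}{\varphi(d)}\gg\log D$$
for every fixed $D$; the average in fact grows like an iterated logarithm of $M$. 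Since the two-dimensional sieve constant for the forms $p$ and $s(m)p+\sigma(m)$ satisfies $\mathfrak{S}(m)\gg\prod_{\ell\mid\sigma(m),\,\ell>2}\frac{\ell-1}{\ell-2}\gg\sigma(m)/\varphi(\sigma(m))$, the unrestricted sum $\sum_{m\le M}\mathfrak{S}(m)$ is $\gg M\log_3 M$, and your argument as written delivers only $O(x\log_3 x/\log x)$, not $O(x/\log x)$. The repair must exploit the coprimality condition $\gcd(s(m),\sigma(m))=\gcd(m,\sigma(m))=1$ \emph{inside} the average rather than merely using it to discard degenerate $m$: for a small prime $q$, the inflated local factor $q/(q-1)$ in $\mathfrak{S}(m)$ occurs precisely when $q\mid\sigma(m)$ and $q\nmid m$, while the $m$ with $q\mid\gcd(m,\sigma(m))$ contribute nothing, and it is the balance of these two effects that makes the \emph{restricted} average of $\mathfrak{S}(m)$ bounded. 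Establishing that correlation statement is the real content of the proof, and it is not implied by (and cannot be replaced with) two separate ``bounded on average'' claims. By comparison, the smooth-number bookkeeping in your point (ii) is routine.
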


In the opposite direction, for sets of integers with `many' prime factors, Troupe proved the following theorem.

\begin{theorem}\cite[Theorem 1.3]{Troupeprimefactors} 
Let $\omega(m)$ denote the number of distinct prime factors of an integer $m$. For any fixed $\epsilon > 0$, if $$\mathcal{A} = \{m : |\omega(m) - \log\log m| > \epsilon \log\log m\}$$ then $s^{-1}(\mathcal{A})$ has asymptotic density zero. 
\end{theorem}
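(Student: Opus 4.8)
The plan is to recast the claim as a normal-order statement for the additive function $n \mapsto \omega(s(n))$. By definition of $\mathcal{A}$, an integer $n$ belongs to $s^{-1}(\mathcal{A})$ exactly when $\abs{\omega(s(n)) - \log\log s(n)} > \epsilon \log\log s(n)$, so I first want to pin down the size of $s(n)$. For all but $o(x)$ integers $n \le x$ the least prime factor $p_1$ of $n$ satisfies $p_1 \le \psi(x)$ for a function $\psi(x) \to \infty$ growing arbitrarily slowly, and then $s(n) = \sigma(n) - n \ge n/p_1 \ge n/\psi(x)$; since also $s(n) \le \sigma(n) \ll n \log\log n$, I conclude that $\log\log s(n) = (1 + o(1))\log\log n = (1+o(1))\log\log x$ for almost all $n \le x$ (discarding as well the $o(x)$ integers below $\sqrt{x}$). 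It therefore suffices to prove that $\omega(s(n)) = (1+o(1))\log\log x$ outside a set of $n \le x$ of density zero, since this forces $s(n) \notin \mathcal{A}$ there.

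To establish this normal order I would run a Tur\'an--Kubilius type second-moment argument tailored to $s(n)$. Writing $\omega(s(n)) = \sum_{p} \mathbf{1}[p \mid s(n)]$ and truncating the primes at $z = x^{1/\log\log\log x}$ (any prime factor of $s(n) \le x\log\log x$ exceeding $z$ belongs to a set of at most $\log x/\log z = \log\log\log x = o(\log\log x)$ such factors, hence is harmless), the problem reduces to the first moment $\sum_{n \le x} \mathbf{1}[p \mid s(n)]$ and the mixed second moment $\sum_{n \le x} \mathbf{1}[p \mid s(n)]\,\mathbf{1}[q \mid s(n)]$ for primes $p, q \le z$. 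The guiding heuristic is that divisibility of $s(n)$ by distinct small primes behaves like independent events of probability $1/p$, which would give mean $\sim \sum_{p \le z} 1/p \sim \log\log z \sim \log\log x$ and a variance of size $O(\log\log x) = o\big((\log\log x)^2\big)$.

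The crux is the equidistribution estimate $\#\{n \le x : p \mid s(n)\} = x/p + (\text{error})$, with the error small after summing over $p \le z$. The mechanism I would exploit is that for $n = \ell m$ with $\ell$ a prime not dividing $m$, multiplicativity of $\sigma$ yields the affine relation
\[
s(\ell m) = \ell\, s(m) + \sigma(m);
\]
consequently, whenever $p \nmid s(m)$, the congruence $p \mid s(\ell m)$ forces $\ell$ into a single residue class modulo $p$. Summing over primes $\ell$ in a dyadic range and over the complementary factor $m$, and feeding in the Siegel--Walfisz theorem for $p \le (\log x)^{A}$, produces the predicted main term $x/p$; the same device applied to two prime factors $\ell_1, \ell_2 \,\|\, n$ yields the product $x/(pq)$ needed for the second moment. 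The main obstacle is to obtain this equidistribution with sufficient uniformity in $p$, and especially to treat the medium and large primes $p$ lying beyond the reach of Siegel--Walfisz. For those I would replace the asymptotic by a Bombieri--Vinogradov estimate averaged over the modulus $p$ (legitimate since $z$ is a small power of $x$), and use a Brun--Titchmarsh upper bound $\ll x/p$ for any residual primes, checking that their aggregate contribution to the mean and the variance is $o(\log\log x)$ and $o\big((\log\log x)^2\big)$ respectively.

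With the two moments secured, Chebyshev's inequality bounds the number of $n \le x$ with $\abs{\omega(s(n)) - \log\log x} > \tfrac{\epsilon}{2}\log\log x$ by $\ll x/(\epsilon^2 \log\log x) = o(x)$. Combined with the reduction $\log\log s(n) \sim \log\log x$ from the first paragraph, this shows that for all but $o(x)$ integers $n \le x$ one has $s(n) \notin \mathcal{A}$, i.e.\ that $s^{-1}(\mathcal{A})$ has asymptotic density zero. I expect the genuinely hard step to be the uniform equidistribution of $s(n)$ modulo $p$ across the full range $p \le z$; the size reduction and the Chebyshev endgame are routine by comparison.
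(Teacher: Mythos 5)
This statement appears in the paper only as a quoted result of Troupe \cite{Troupeprimefactors}; the paper gives no proof of it, so the only meaningful comparison is with the proof in the cited source. Your sketch follows essentially the same route as Troupe's argument (which in turn adapts the method of \cite{egps}): reduce to the normal-order statement for $\omega(s(n))$, observe that $\log\log s(n)=(1+o(1))\log\log n$ on a set of density one, truncate the prime factors at a small power of $x$, and run a Tur\'an--Kubilius second-moment computation whose equidistribution input is the identity $s(Pm)=P\,s(m)+\sigma(m)$ with $P$ the largest prime factor, combined with Siegel--Walfisz, Bombieri--Vinogradov, and Brun--Titchmarsh. You have correctly located the crux. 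Two details that your sketch glosses over and that occupy most of the work in the source: (i) the residue-class device degenerates for those $m$ with $p\mid s(m)$, since then $p\mid s(Pm)$ holds for every admissible $P$ or for none according as $p\mid\sigma(m)$ or not, and the contribution of such $m$ must be bounded separately (this is where conditions like $p\mid\gcd(m,\sigma(m))$ enter); (ii) your parameters are in tension, because Bombieri--Vinogradov for the modulus $pq\le z^2$ requires $z^2\le (x/m)^{1/2-\varepsilon}$ while $x/m$ is only guaranteed to be of size $P(n)$, so with $z=x^{1/\log_3 x}$ the second-moment moduli can exceed the length of the sum over $P$; this is repaired by taking, say, $z=x^{1/(\log_3 x)^2}$ and restricting to $n$ with $P(n)>x^{1/\log_3 x}$, which still keeps $\log x/\log z=o(\log\log x)$. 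Neither point undermines the strategy, which is the one actually used.
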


In other words, not only are numbers with a lot more than the ``normal'' number of prime factors rare, their preimages under $s$ are rare as well. 
This theorem implies that $\log\log n$ is the normal order of $\omega (s(n))$. Very recently, Pollack and Troupe \cite{PollackTroupeErdosKacsn} have
improved this, showing that $\omega (s(n))$ satisfies an Erd\H os--Kac-type theorem. This signifies that $\omega (s(n))$ asymptotically  has a normal distribution with mean and variance $\log\log n$.

There are several other sets $\mathcal{A}$ whose preimages under $s$ have been studied in recent years. For example, Pollack considered the case where $\mathcal{A}$ is a set of palindromes.

\begin{theorem}\cite[Theorem 1]{PollackPalindromes}\label{Theorem: PollackPalindromes}
If $\mathcal{A}$ is the set of palindromes in any given base, then $s^{-1}(\mathcal{A})$ has asymptotic density zero. 
\end{theorem}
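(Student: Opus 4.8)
The plan is to show that $\#\{n \le x : s(n) \in \mathcal{A}\} = o(x)$, exploiting two features of the set $\mathcal{A}$ of base-$b$ palindromes: it is sparse, since the number of palindromes up to $Y$ is $\ll Y^{1/2}$, and, being a digit-restricted set, it should equidistribute in arithmetic progressions. I would first discard a density-zero set of atypical $n$. Writing $P^+(n)$ and $P^-(n)$ for the largest and smallest prime factors of $n$, the $y$-smooth integers $n \le x$ with $P^+(n) \le y := x^{1/\log\log x}$ have density $o(1)$, while those divisible by $p^2$ for some prime $p > y$ contribute $\ll x/(y\log y) = o(x)$; so I may assume $p := P^+(n) > y$ with $p \| n$. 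Likewise, the integers with $P^-(n) > z$ have density $\ll 1/\log z$, so taking $z \to \infty$ slowly (say $z = \log x$) I may assume $P^-(n) \le z$. Finally the primes, for which $s(p) = 1 \in \mathcal{A}$, number only $\pi(x) = o(x)$.

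After these reductions I write $n = pm$ with $p = P^+(n) > y$, $p \nmid m$, and $m \le x/y$ satisfying $P^-(m) = P^-(n) \le z$. The key algebraic identity is
\[
  s(pm) = (p+1)\sigma(m) - pm = p\,s(m) + \sigma(m),
\]
valid whenever $p \nmid m$. Since $m$ has a prime factor at most $z$, we get $\sigma(m)/m \ge 1 + 1/z$, hence $s(m) \ge m/z \ge 1$. For fixed such $m$, the map $p \mapsto p\,s(m) + \sigma(m)$ is injective and its image lies in the residue class $\sigma(m) \bmod s(m)$; moreover $s(pm) \le \sigma(pm) \ll x\log\log x =: Y$. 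Therefore the number of admissible primes $p$ is at most the number of palindromes $t \le Y$ with $t \equiv \sigma(m) \pmod{s(m)}$.

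The heart of the argument is the uniform equidistribution estimate
\[
  \#\{\, t \le Y : t \text{ is a base-}b\text{ palindrome},\ t \equiv a \!\!\pmod q \,\} \ll \frac{Y^{1/2}}{q} + 1,
\]
to hold for all residues $a$ and all moduli $q$ up to a power of $Y$. Granting this with $q = s(m) \ge m/z$ and summing over $m$,
\[
  \sum_{\substack{m \le x/y \\ P^-(m) \le z}} \left( \frac{Y^{1/2}}{s(m)} + 1 \right)
    \ll Y^{1/2} z \sum_{m \le x/y} \frac1m + \frac{x}{y}
    \ll x^{1/2} z (\log\log x)^{1/2}\log x + \frac{x}{y},
\]
which is $o(x)$ for $z = \log x$ because $x/y = x^{1-1/\log\log x} = o(x)$. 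Combined with the negligible atypical classes, this yields the theorem.

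I expect the displayed palindrome-in-progressions bound to be the main obstacle. Only an upper bound is needed, which is gentler than an asymptotic, but it must hold uniformly in $q$, in particular for $q$ sharing factors with $b$ and for $q$ as large as $m \approx x^{1-o(1)}$. Via orthogonality this reduces to controlling exponential sums of the shape $\prod_i \bigl|\sum_{a_i=0}^{b-1} e(c_i t a_i / q)\bigr|$ over the free ``half'' of the digits, where the coefficients $c_i = b^i + b^{L-1-i}$ encode the palindromic symmetry of a length-$L$ palindrome; bounding these uniformly over the length $L$ and over the non-principal frequencies $t$ is precisely the kind of digit-distribution input that the missing-digit framework of this paper is built to supply. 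A structural simplification available for half of the palindromes is that even-length base-$b$ palindromes are automatically divisible by $b+1$, which fixes part of the residue $\sigma(m) \bmod s(m)$ and may be used to streamline the congruence analysis in that case.
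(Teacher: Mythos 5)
Your reduction is the standard skeleton for these problems (it is essentially the one Troupe uses for sums of two squares): discard smooth and otherwise atypical $n$, write $n=pm$ with $p=P^+(n)\nmid m$, use $s(pm)=p\,s(m)+\sigma(m)$, and count elements of $\mathcal{A}$ in the progression $\sigma(m)\bmod s(m)$ with modulus $s(m)\geq m/z$. That part is correct. The proof then stands or falls on the displayed estimate
$$\#\{t\le Y:\ t\ \text{a base-}b\ \text{palindrome},\ t\equiv a\ (\mathrm{mod}\ q)\}\ \ll\ Y^{1/2}/q+1,$$
uniformly in $a$ and in $q$ up to roughly $Y$, and this is a genuine gap, not a routine lemma. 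The moduli you need are $q=s(m)$ with $m$ up to $x^{1-o(1)}$, i.e.\ an uncontrolled set of individual moduli of size up to $Y^{1-o(1)}$. Nothing remotely like this is known for palindromes (or for any digitally defined set): the Erd\H{o}s--Mauduit--S\'ark\"ozy-type results quoted in Section \ref{section:missing digits} reach only $q\le\exp(c\sqrt{\log x})$ for individual moduli, the Dartyge--Mauduit/Konyagin improvements give almost all $q\le x^{\alpha}$ for a small $\alpha$ (useless here, since your moduli form a thin prescribed set), and Maynard's machinery for missing digits still stops well short of exponent $1$. For palindromes the situation is worse still, since the relevant exponential sums $\prod_i|\sum_{a_i}e(c_i h a_i/q)|$ with $c_i=b^i+b^{L-1-i}$ would need savings of size $q$ to beat the trivial bound, which is hopeless once $q$ exceeds a small power of $Y^{1/2}$; and structural obstructions such as $(b+1)\mid t$ for every even-length palindrome $t$ show that equidistribution genuinely fails for some moduli. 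Flagging this as ``the main obstacle'' is accurate, but it means the proof cannot be completed along these lines.

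For comparison: the paper does not reprove this theorem at all. It cites Pollack and observes that it follows immediately from Theorem \ref{Theorem: PollackPomeranceThompson}, because the set of palindromes up to $x$ has cardinality $O(\sqrt{x})$ --- only sparseness is used, no distribution in progressions. Pollack's original argument (the model for Section 4 of this paper) avoids your obstacle by running the congruence the other way: one splits according to whether $g^k\mid\sigma(n)$, handles the complementary set by a lemma of the type of Lemma \ref{Pollacklemma} or Lemma \ref{keylemma}, and in the main case converts the digital condition on $s(n)$ into the congruence $n\equiv -B\ (\mathrm{mod}\ g^k)$ for one of few admissible $B$. There the variable being counted in progressions is $n$ itself, where equidistribution is trivial, and the only input about palindromes (or missing-digit integers) is that few residues $B$ modulo $g^k$ arise. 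If you want a self-contained proof, that is the route to take; alternatively, prove only the count $O(\sqrt{Y})$ for palindromes up to $Y$ and invoke Theorem \ref{Theorem: PollackPomeranceThompson}.
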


%{\bf GC: I think we should add some text between the theorems.}

%\begin{theorem}\cite[\bf Add Theorem/Lemma]{PollackPomeranceThompson}  
%Let $\mathcal{A}$ be the set of squares up to $x\boldsymbol{\in ???}$. Then $s^{-1}(\mathcal{A})$ has asymptotic density zero. 
%\end{theorem}

%\textbf{CDo: I tried to find the precise theorem, but I could not find it. Are we sure that this is the right paper to cite? KB:  This is a corollary of Theorem 1.8 so I removed this and the comments. }

In another paper, Troupe took $\mathcal{A}$ to be the set of integers representable as a sum of two squares.

\begin{theorem}\cite[Theorem 1.2]{TroupeSumofsquares}
Let $\mathcal{A}$ be the set of integers $n \leq x$ that can be written as a sum of two squares. Then $\#(s^{-1}(\mathcal{A})\cap [1,x]) \asymp \frac{x}{\sqrt{\log x}}.$ 
\end{theorem}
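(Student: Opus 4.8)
The plan is to exploit the elementary identity
$$ s(pm) = p\,s(m) + \sigma(m) \qquad (p \text{ prime},\ p \nmid m), $$
which follows from $\sigma(pm) = (p+1)\sigma(m)$, and to regard $s(pm)$, for fixed $m$, as the values of the linear form $p \mapsto s(m)\,p + \sigma(m)$ evaluated at primes $p$. An integer is a sum of two squares exactly when no prime $\equiv 3 \pmod 4$ divides it to an odd power, and detecting this property is a sieve problem of dimension $1/2$. For the sequence of shifted primes $\{\,s(m)p + \sigma(m) : p \le x/m\,\}$ the required distributional input is furnished by the Bombieri--Vinogradov theorem, which gives level of distribution $x^{1/2-\varepsilon}$; so for each fixed $m$ the half-dimensional (Iwaniec) sieve should yield, as both an upper and a lower bound,
$$ \#\{\, p \le x/m \text{ prime} : s(m)p + \sigma(m) \text{ is a sum of two squares}\,\} \asymp \frac{x/m}{\log(x/m)\,\sqrt{\log x}}, $$
provided $m$ lies in a density-one set for which the form has non-degenerate local densities (observe that $\gcd(s(m),\sigma(m)) = \gcd(s(m),m)$, so the obstructing local factors at primes $\equiv 3 \pmod 4$ are controlled for almost all $m$).

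For the lower bound I would fix $m \le x^{1/2}$ in the good set above and count primes $p \in (x^{1/2}, x/m]$ with $s(pm)$ a sum of two squares. Since $p > x^{1/2} \ge m \ge P^+(m)$, where $P^+(m)$ is the largest prime factor of $m$, each such $p$ is automatically the largest prime factor of $n := pm$ and satisfies $p \nmid m$; hence distinct pairs $(p,m)$ give distinct $n \le x$ lying in $s^{-1}(\mathcal A)$. The sieve contributes $\gg (x/m)/(\log x)^{3/2}$ for each such $m$, and summing over the good $m \le x^{1/2}$ gives
$$ \#(s^{-1}(\mathcal A) \cap [1,x]) \gg \frac{x}{(\log x)^{3/2}} \sum_{\substack{m \le x^{1/2} \\ m \text{ good}}} \frac1m \gg \frac{x}{(\log x)^{3/2}} \cdot \log x = \frac{x}{\sqrt{\log x}}. $$

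For the upper bound I would write each $n \le x$ (after discarding the negligible sets of prime powers and of very smooth $n$) as $n = pm$ with $p = P^+(n)$, and bound the count by summing the sieve upper bound over $m$. I expect this direction to be the main obstacle. The arithmetic per $m$ is the same estimate, but now $m$ ranges over a much wider interval and the constraint that $p$ be the largest prime factor, namely $p > P^+(m)$, must be enforced: summing $1/(m\log(x/m))$ over all $m < x$ without this restriction introduces a spurious factor of $\log\log x$, so one must keep the smoothness condition on $m$ or decompose dyadically according to the size of $p$, after which the total collapses to $\ll x/\sqrt{\log x}$. The two sources of difficulty are (i) making the half-dimensional sieve estimates uniform in the coefficients $s(m),\sigma(m)$ and in $m$, so that summation against $\sum 1/m$ reproduces the order $x/\sqrt{\log x}$ exactly rather than with an extra power of $\log\log x$; and (ii) controlling every exceptional contribution, namely the very smooth $n$, those with $p^2 \mid n$, the sparse $m$ with degenerate local densities, and the even-power corrections from primes $\equiv 3 \pmod 4$ dividing $s(n)$ to a large even power (this last handled by writing each sum of two squares as $k^2$ times a squarefree integer all of whose prime factors are $2$ or $\equiv 1 \pmod 4$, and summing the convergent series over $k$).
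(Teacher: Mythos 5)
This statement is quoted from Troupe's paper \cite{TroupeSumofsquares} and is not proved in the present article, so there is no internal proof to compare against; I can only measure your sketch against Troupe's published argument. Your architecture is essentially his: write $n=Pm$ with $P=P^{+}(n)$, use the identity $s(Pm)=Ps(m)+\sigma(m)$, detect the sum-of-two-squares property of the linear form in the prime variable by a half-dimensional sieve with Bombieri--Vinogradov supplying the level of distribution, and recover the order $x/\sqrt{\log x}$ by summing over $m$ while enforcing $P>P^{+}(m)$ to kill the spurious $\log\log x$. The identity, the shape of the local densities, and the two difficulties you flag for the upper bound are all correctly identified.

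As written, however, this is a program rather than a proof, and there is one place where it glosses over a genuine obstacle rather than merely deferring routine work: the \emph{lower} bound. A dimension-$1/2$ lower-bound sieve applied to $\{s(m)p+\sigma(m): p\le x/m\}$ produces elements with no prime factor $\equiv 3\bmod 4$ below the sifting limit $z$, and $z$ is constrained by the level of distribution, here $(x/m)^{1/2-\varepsilon}$, whereas the sifted values have size about $s(m)\cdot x/m$, which is typically of order $x$. So $z$ is far below the square root of the element size, and a surviving element may still be divisible to an odd power by a large prime $\equiv 3\bmod 4$, hence fail to be a sum of two squares; the naive repair of subtracting, for each large $q\equiv 3\bmod 4$, the elements divisible by $q$ loses a factor of $\log\log x$ and destroys the bound. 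Bridging this gap is exactly the content of Iwaniec's half-dimensional sieve for sums of two squares (in the form given in Friedlander--Iwaniec's \emph{Opera de Cribro}), which is the input Troupe actually invokes; you mention the even-power/large-prime issue only in the upper-bound direction, via the $k^2$ decomposition, where it is comparatively harmless. With that ingredient made explicit, together with the claimed uniformity in the coefficients $s(m),\sigma(m)$, your outline matches the known proof.
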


Note that Troupe obtains both upper and lower bounds; most of the aforementioned papers only obtain upper bounds. In general, it is difficult to obtain nontrivial lower bounds. Another recent result, due to Pollack and 
 Singha Roy, shows that $k$-th power-free values of $n$ and $s(n)$ are equally common.

%%%Maybe explain why it is difficult to obtain nontrivial lower bounds. 

\begin{theorem}\cite[Theorem 1.3]{PollackRoyPowerfree}
Fix $k \geq 4$. On a set of integers with asymptotic density $1$, $$n \ \hbox{is $k$-free} \iff s(n) \ \hbox{is $k$-free}.$$ 
\end{theorem}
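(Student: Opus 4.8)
The plan is to prove that if $k \geq 4$ is fixed, then the set of $n$ for which exactly one of $n$, $s(n)$ is $k$-free has asymptotic density zero; this immediately yields the stated equivalence on a density-$1$ set. The strategy splits into two complementary one-sided estimates. First I would show that $n$ is $k$-free for almost all $n$ (this is classical: the density of $k$-free integers is $1/\zeta(k)$, and the complement has density $1 - 1/\zeta(k)$, which is \emph{positive}, so in fact I must be more careful). \emph{Correction to the setup:} the correct target is the density of the symmetric difference, namely the set where $n$ is $k$-free but $s(n)$ is not, together with the set where $s(n)$ is $k$-free but $n$ is not. I would instead compare the two densities directly: the density of $k$-free $n$ is $1/\zeta(k)$, and the heart of the theorem is to show that $s(n)$ is $k$-free with the \emph{same} density $1/\zeta(k)$, coupled together at the level of individual $n$ so that the two events coincide almost always.

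The key mechanism is that for a prime power $p^k$, divisibility $p^k \mid s(n)$ is controlled by the behaviour of $n$ modulo small prime powers, and for most $n$ these congruence conditions for $n$ and for $s(n)$ decouple across different primes. First I would fix a slowly growing parameter $y = y(x)$ and discard integers $n \leq x$ divisible by $p^2$ for some prime $p > y$ (a sparse set), so that the ``$k$-free'' status of $n$ is determined by primes up to $y$. Second, I would use the multiplicativity of $s$ on the coprime parts: writing $n = \prod p^{a_p}$, one has $s(n)+n = \sigma(n) = \prod \frac{p^{a_p+1}-1}{p-1}$, and I would analyze when $p^k \mid s(n)$ by studying $s(n) \bmod p^k$ as $n$ ranges over residue classes. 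The crucial input is an equidistribution statement: as $n$ varies, $s(n)$ becomes equidistributed modulo $p^k$ (away from obstructions), so the probability that $p^k \mid s(n)$ is asymptotically $1/p^k$, matching the probability that $p^k \mid n$.

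The technical core, and the reason the hypothesis $k \geq 4$ appears, is summing the error over all primes $p$: one needs $\sum_p 1/p^{k-1}$ or a similar tail to converge fast enough to control the total contribution of ``$p^k \mid s(n)$ for some large $p$,'' and the exponent $k \geq 4$ is exactly what makes the relevant series converge while leaving enough room for the equidistribution error terms. I would establish the equidistribution of $s(n)$ modulo $p^k$ using a sieve or character-sum argument, handling the anatomy of $n$ (its factorization into primes of various sizes) via a standard decomposition into a ``smooth part'' and a ``large prime part,'' and showing the large prime factor $P$ of $n$ makes $\sigma(P) = P+1$ range over a full set of residues modulo $p^k$.

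The hard part will be the uniform control of the error terms in the equidistribution of $s(n) \bmod p^k$ \emph{simultaneously} for all primes $p$ up to a suitable bound, because one must rule out unlikely conspiracies where $p^k \mid s(n)$ and $p^k \mid n$ fail to line up for a non-negligible set of $n$. I expect the main obstacle to be the integers $n$ whose largest prime factor is small (the $y$-smooth numbers), since for these the convenient ``free variable'' $\sigma(P) = P + 1$ is unavailable; these must be shown to be sufficiently rare (via known smooth-number counts, which are more than small enough) so that they contribute nothing to the density. Assembling these pieces, the symmetric difference of $\{n : n \text{ is } k\text{-free}\}$ and $\{n : s(n) \text{ is } k\text{-free}\}$ has density zero, which is the assertion.
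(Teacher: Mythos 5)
First, a point of orientation: the paper you were given does not prove this statement at all. It is quoted verbatim from Pollack and Singha Roy (\emph{Powerfree sums of proper divisors}) as part of a survey of prior work on the EGPS conjecture, so there is no in-paper proof to compare against. Judging your proposal on its own merits, the overall architecture is sensible --- reduce to showing the symmetric difference of $\{n : n \text{ is } k\text{-free}\}$ and $\{n : s(n) \text{ is } k\text{-free}\}$ has density zero, split the primes $p$ with $p^k \mid n$ or $p^k \mid s(n)$ into small and large ranges, and discard smooth $n$ --- but the central mechanism is missing, and the piece you propose in its place does not do the job.

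The engine of the actual argument (and of essentially every result of this type, including the main theorem of the present paper) is the Watson/Pomerance/Pollack fact, stated here as Lemma 3.4: for a fixed modulus $q$, almost all $n$ satisfy $q \mid \sigma(n)$, because $n$ typically has many prime factors $r \equiv -1 \bmod{q}$ appearing to the first power. Taking $q = p^k$ for a fixed small prime $p$, this gives $s(n) = \sigma(n) - n \equiv -n \bmod{p^k}$ for almost all $n$, so the events $p^k \mid n$ and $p^k \mid s(n)$ \emph{coincide pointwise} outside a density-zero set. That is the coupling the theorem needs. Your proposed substitute --- equidistribution of $s(n)$ modulo $p^k$, so that $p^k \mid s(n)$ has probability $1/p^k$ matching the probability that $p^k \mid n$ --- is a statement about marginal densities only; two events can each have density $1/p^k$ and be entirely disjoint, so no amount of equidistribution for $s(n)$ alone can yield the ``if and only if.'' You acknowledge that the events must be ``coupled at the level of individual $n$,'' but you never supply a mechanism for that coupling, and the one based on the free variable $\sigma(P) = P+1$ for the largest prime factor $P$ points in the wrong direction (it is an equidistribution device, not a coincidence device). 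Relatedly, your account of where $k \ge 4$ enters is off: the sum $\sum_p p^{1-k}$ you invoke already converges for $k \ge 2$ and handles only the trivial ``$p^k \mid n$ for some large $p$'' count. The genuinely hard case, and the one forcing $k \ge 4$, is bounding the number of $n \le x$ with $p^k \mid s(n)$ for some \emph{large} $p$ (where Lemma 3.4 is useless); there one writes $n = Pm$ with $P$ the largest prime factor, so that $p^k \mid s(n)$ becomes a congruence condition $P(\sigma(m) - m) \equiv -\sigma(m) \bmod{p^k}$ on $P$, and the losses from $\gcd(p^k, s(m))$ and from summing over powerful moduli are what require the exponent to be at least $4$. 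Without Lemma 3.4 for the small primes and this largest-prime-factor congruence argument for the large ones, the proof does not close.
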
 

The squarefree and cubefree cases remain conjectural. In a slightly different direction, there is a recent result by Lebowitz-Lockard, Pollack, and Singha Roy which shows that the values of $s(n)$ (for composite $n$) are equidistributed among the residue classes modulo $p$ for small primes $p$.

\begin{theorem}\cite[Theorem 1.3]{LebowitzLockardPollackRoy}
Fix $A > 0$. As $x \rightarrow \infty$, the number of composite $n \leq x$ with $s(n) \equiv a \bmod{p}$ is $(1 + o(1))x/p$, for every residue class $a \bmod{p}$ with $p \leq (\log x)^A.$ 
\end{theorem}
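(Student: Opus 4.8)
Throughout, write $\ell$ for the prime modulus (called $p$ in the statement), $e(t)=e^{2\pi i t}$, and $e_\ell(t)=e(t/\ell)$. The plan is to detect the residue condition by additive characters and to bound the resulting exponential sums. Since the number of composite $n\le x$ is $x(1+o(1))$, orthogonality gives, for every residue $a$,
\[
\#\{n\le x\text{ composite}:s(n)\equiv a\bmod{\ell}\}=\frac{x}{\ell}\bigl(1+o(1)\bigr)+\frac1\ell\sum_{b=1}^{\ell-1}e_\ell(-ab)\,S_b,\qquad S_b:=\sum_{\substack{n\le x\\ n\text{ composite}}}e_\ell\bigl(b\,s(n)\bigr).
\]
It therefore suffices to prove $|S_b|=o(x/\ell)$ uniformly for $1\le b\le\ell-1$ and all primes $\ell\le(\log x)^A$, as the $\ell-1$ terms then contribute $o(x/\ell)$ to the error.

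The first real step is to linearise $s$ in the largest prime factor. For all but $o(x/\ell)$ of the $n\le x$ one may write $n=qm$ with $q=P(n)$ prime, $q\nmid m$, and $q>y:=x^{1/\log\log x}$; the exceptional $n$ — those with $P(n)\le y$, bounded by de~Bruijn's estimate $\Psi(x,y)=x\rho(\log x/\log y)$, and those with $P(n)^2\mid n$, bounded by $O(x/y)$ — are negligible even after division by $\ell$. On the good set multiplicativity of $\sigma$ gives $s(qm)=q\,s(m)+\sigma(m)$, so
\[
S_b=\sum_{m}e_\ell\bigl(b\sigma(m)\bigr)\sum_{q}e_\ell\bigl(b\,s(m)\,q\bigr)+(\text{negligible}),
\]
with $q$ running over primes in an interval depending on $m$. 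The inner sum is an additive character evaluated at primes; by the Siegel--Walfisz theorem, valid uniformly for $\ell\le(\log x)^A$, it equals $\tfrac1{\phi(\ell)}c_\ell\bigl(b\,s(m)\bigr)\,w(m)$ plus an error saving a factor $\exp(-c\sqrt{\log x})$, where $c_\ell$ is the Ramanujan sum and $w(m)$ counts the admissible primes. Since $\ell$ is prime, $c_\ell(b\,s(m))=\ell\cdot\mathbf 1[\ell\mid s(m)]-1$, and the accumulated Siegel--Walfisz error is $o(x/(\log x)^A)$.

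Writing $B=\sum_{m:\,\sigma(m)\equiv m\ (\ell)}e_\ell(bm)\,w(m)$ and $C=\sum_m e_\ell\bigl(b\sigma(m)\bigr)w(m)$ (using $\sigma(m)\equiv m$ on the support of $B$), the above collapses to $\phi(\ell)\,S_b=\ell B-C+o(x/(\log x)^A)$, so everything reduces to the bound $\ell B-C=o(x)$, uniformly in $\ell\le(\log x)^A$. It is essential that $B$ and $C$ be handled together: neither is small on its own — for $\ell=2$ one has $\sigma(m)$ even for almost all $m$, reflecting the identity $s(n)\equiv n\bmod{2}$ for almost all $n$ — and the cancellation is between them. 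Expanding the congruence $\sigma(m)\equiv m$ by additive characters yields $\ell B-C=\sum_{c\ne b}D_{c,\,b-c}$, where $D_{\alpha,\beta}:=\sum_m e_\ell\bigl(\alpha\sigma(m)+\beta m\bigr)w(m)$ and the frequency $\beta=b-c\not\equiv0$ throughout. For the purely additive terms $\alpha\equiv0$ the smoothness and slow variation of $w$ give cancellation in $m$ by partial summation. For the mixed terms $\alpha\not\equiv0$ one peels the largest prime of $m$ again, using $\alpha\sigma(m)+\beta m=q'\bigl(\alpha\sigma(m')+\beta m'\bigr)+\alpha\sigma(m')$ for $m=q'm'$; each peeling contributes a factor $-1/\phi(\ell)$ from the Ramanujan main term and shunts aside only those $m'$ in the single degenerate class $\alpha\sigma(m')+\beta m'\equiv0\bmod{\ell}$.

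The main obstacle is to make this iteration quantitative and uniform in $\ell$. Repeated peeling gains a factor $\phi(\ell)^{-1}$ per step on the non-degenerate part, and since a typical $m$ admits $\asymp\log\log x$ peels this produces the saving $o(1)$ needed after summing the $\ell-1$ frequencies — comfortably for $\ell$ near $(\log x)^A$ (where $\phi(\ell)$ is already large) and, for small fixed $\ell$, because the number of available peels tends to infinity. The delicate point is controlling the degenerate cofactors that accumulate along the way — those $m$ for which the relevant linear form in $\sigma(m)$ and $m$ vanishes modulo $\ell$ — and showing their total weighted mass is $o(x)$ uniformly for $\ell\le(\log x)^A$. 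This is exactly where the non-multiplicativity of $s=\sigma-\mathrm{id}$ bites, since the constraints $m\bmod{\ell}$ and $\sigma(m)\bmod{\ell}$ must be tracked jointly. I expect estimating these degenerate sets — equivalently, proving a mean-value bound for the multiplicative function $\chi\!\circ\!\sigma$ uniform over nonprincipal characters $\chi\bmod{\ell}$ and over $\ell$ in the whole range, driven by the fact that $\chi(\sigma(q))=\chi(q+1)$ averages to $-1/(\ell-1)$ over primes $q$ — to be the technical heart of the argument.
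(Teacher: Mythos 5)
This theorem is not proved in the paper under review: it is quoted verbatim from Lebowitz-Lockard, Pollack, and Singha Roy \cite{LebowitzLockardPollackRoy} as background in the introduction, so there is no internal proof to compare against. Judged on its own terms, your opening reduction is the right one and matches the standard strategy for such results: orthogonality of additive characters reduces the theorem to $|S_b|=o(x/\ell)$; discarding the $n$ with $P(n)\le y$ or $P(n)^2\mid n$ is harmless; the linearisation $s(qm)=qs(m)+\sigma(m)$ is correct; and the identity $\varphi(\ell)S_b=\ell B-C+(\text{error})$, with the observation that the cancellation must occur \emph{between} $B$ and $C$ (as the case $\ell=2$ forces), together with the character expansion $\ell B-C=\sum_{c\ne b}D_{c,b-c}$ in which the linear frequency $\beta=b-c$ never vanishes, is exactly the correct way to isolate where the difficulty lives.

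However, what you have written is a plan, not a proof, and the gap sits precisely at the decisive step. First, the claim that the terms with $\alpha\equiv 0$ are handled by ``partial summation'' is too quick: the weight $w(m)$ counts primes $q$ in an interval whose endpoints depend on $P(m)$ and on $x/m$, so one must interchange the order of summation and prove equidistribution modulo $\ell$ of the integers $m\le x/q$ with $P(m)<q$, uniformly for $\ell\le(\log x)^A$ --- true, but requiring an argument. Second, and more seriously, the iterative peeling for the mixed terms $\alpha\not\equiv 0$ is described only qualitatively: you assert a gain of $\varphi(\ell)^{-1}$ per peel and $\asymp\log\log x$ available peels, but you do not control the ``degenerate cofactors'' $m'$ with $\alpha\sigma(m')+\beta m'\equiv 0\bmod{\ell}$ that are shed at every stage, nor the accumulation of Siegel--Walfisz errors over the (now $m$-dependent and nested) ranges of the peeled primes. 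You acknowledge this yourself in the final sentence, where you identify the needed uniform mean-value bound for $\chi\circ\sigma$ over nonprincipal characters $\chi\bmod{\ell}$ as ``the technical heart.'' That heart is the theorem; without it the argument establishes only the reduction, not the equidistribution. So: correct skeleton and a sensible route, but the proof is genuinely incomplete at the point where all of the work in \cite{LebowitzLockardPollackRoy} is actually done.
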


In the present paper, we consider the preimages under $s$ of a new set $\mathcal{A}$, namely, the set of integers with missing digits. We will elaborate more on these integers in Section 2. In particular, we will give some historical background and define the notation carefully. For now, we briefly discuss our main results. We show that the EGPS Conjecture holds for sets of integers with missing digits. Moreover, we prove a quantitative version of this result. 

\begin{theorem}\label{short theorem in intro} 
Fix $g\geq 3$, $\gamma\in(0,1)$, and a nonempty set $\mathcal{D}\subsetneq \{0,1,\dots, g-1\}$. For all sufficiently large $x$, the number of $n\leq x$ for which $s(n)$ has all of its digits in base $g$ restricted to digits in $\mathcal{D}$ is $O(x\exp(-(\log\log x)^\gamma).$ %{\color{magenta} where $c$ is a positive constant depending on %$g$ \textit{and $\gamma$}.} 
\end{theorem}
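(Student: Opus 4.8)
The plan is to exploit the sparsity of missing-digit sets through a congruence condition on the low-order base-$g$ digits of $s(n)$, after first extracting a large prime factor of $n$ so that $s(n)$ becomes an explicit affine function of that prime. The upper bound we seek is insensitive to lower-order sets, so I may freely discard any family of $n$ that is genuinely rare.

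First I would remove the atypical $n\le x$. Writing $P(n)$ for the largest prime factor and setting $y=\exp((\log x)^{1/2})$, the $y$-smooth $n$ (those with $P(n)\le y$) number $x\rho(\log x/\log y)$ by Dickman's estimate, which is far smaller than the claimed bound, while the $n$ with $P(n)^2\mid n$ number $O(x/y)$. For the surviving $n$ I write $n=pm$ with $p=P(n)>y$ and $p\nmid m$, so that
\[
s(n)=\sigma(n)-n=(p+1)\sigma(m)-pm=p\,s(m)+\sigma(m).
\]
If $m\ge 2$ then $s(m)\ge 1$ and hence $s(n)>y$; the remaining case $m=1$, that is $n$ prime with $s(n)=1$, contributes at most $\pi(x)=O(x/\log x)$, which is absorbed into the bound since $\gamma<1$.

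Next I would encode the digit restriction as a congruence. If $s(n)\in\mathcal{A}$ and $s(n)\ge g^j$, then the last $j$ base-$g$ digits of $s(n)$ all lie in $\mathcal{D}$, i.e.
\[
s(n)\equiv r\bmod{g^j}\quad\text{for some }r\in\mathcal{R}_j:=\{\,r\bmod{g^j}:\text{every base-}g\text{ digit of }r\text{ lies in }\mathcal{D}\,\},
\]
and $|\mathcal{R}_j|=|\mathcal{D}|^j$. Fixing $m$ and inserting $s(n)=p\,s(m)+\sigma(m)$, each admissible $r$ turns this into a single congruence $p\equiv p_0\bmod{g^j/d}$, where $d=\gcd(s(m),g^j)$ and admissibility requires $d\mid r-\sigma(m)$. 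Here prime counting is cheap: the Brun--Titchmarsh inequality bounds the number of such $p\le x/m$ by $\ll \frac{x/m}{\varphi(g^j/d)\log(x/m)}$, uniformly for $g^j<x/m$, so no ceiling on $g^j$ arises from the primes. Summing over the admissible $r\in\mathcal{R}_j$ gives, for each fixed $m$,
\[
\#\{p:pm\le x,\ s(pm)\bmod{g^j}\in\mathcal{R}_j\}\ \ll\ \Big(\tfrac{|\mathcal{D}|}{g}\Big)^{\,j-v(m)}\,\frac{x/m}{\log(x/m)},
\]
where $v(m)$ is the number of lowest base-$g$ digits of $s(n)$ that $d=\gcd(s(m),g^j)$ freezes, governed by the valuations $v_\ell(s(m))$ at the primes $\ell\mid g$.

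It then remains to sum over $m$ and to optimize $j$. The tame part is the sum $\sum_m \frac{1}{m\log(x/m)}$, kept under control by the lower bound $p>y$ on the extracted prime. The genuine difficulty---and what I expect to be the main obstacle---is the contribution of those $m$ for which $s(m)$ is divisible by a large power of a prime $\ell\mid g$: for these the lowest $v(m)$ base-$g$ digits of $s(n)$ are insensitive to $p$, so the per-$m$ saving degrades from $(|\mathcal{D}|/g)^{j}$ to $(|\mathcal{D}|/g)^{j-v(m)}$. One recovers the loss only by exploiting that such $m$ are correspondingly rare: admissibility forces the lowest $v(m)$ base-$g$ digits of $\sigma(m)$ into $\mathcal{D}$, which should supply a compensating factor. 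Making this precise requires controlling the joint distribution, as $m$ varies, of the valuation $v_\ell(s(m))$ together with the low-order digits of $\sigma(m)$---equivalently, a handle on the equidistribution of $s(n)$ modulo the growing prime powers $\ell^{k}$ with $\ell\mid g$. The range over which this compensation can be sustained is exactly what governs the final bound, and I expect it to be responsible for the saving being only $\exp(-(\log\log x)^{\gamma})$ rather than a power of $\log x$; choosing $j$ at the edge of that range then delivers the theorem.
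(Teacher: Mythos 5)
Your reduction is correct as far as it goes ($s(pm)=p\,s(m)+\sigma(m)$ for $p=P(n)\nmid m$, discarding smooth $n$ and squarefull-top $n$, converting the digit restriction into membership of $s(n)\bmod g^j$ in a set $\mathcal{R}_j$ of size $|\mathcal{D}|^j$, and Brun--Titchmarsh for each resulting progression), but the argument has a genuine gap at exactly the point you flag and do not resolve: the contribution of the $m$ for which $d=\gcd(s(m),g^j)$ is large. In the extreme case $g^j\mid s(m)$ the congruence places no restriction on $p$ at all, and the only possible saving is the requirement that the low-order digits of $\sigma(m)$ lie in $\mathcal{D}$ --- which is a statement of precisely the same nature as the theorem being proved, now for $\sigma$ in place of $s$. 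Since such $m$ could a priori have positive logarithmic density, the sum $\sum_m \frac{x}{m\log(x/m)}$ over them could be of order $x$, and nothing in the proposal rules this out. The "compensating factor" you invoke requires uniform control of the joint distribution of $v_\ell(s(m))$ and of $\sigma(m)$ modulo growing powers of primes dividing $g$; no such input is available off the shelf in the range needed, and supplying it is the actual content of the problem. So the proposal is a plausible programme, not a proof.

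It is worth noting that the paper's argument sidesteps the $n=pm$ factorization entirely. It fixes $k\asymp(\log\log x)^{\gamma}$ and splits $n\le x$ according to whether $g^k\mid\sigma(n)$. When $g^k\mid\sigma(n)$, the identity $n=\sigma(n)-s(n)$ forces $n\equiv -B\pmod{g^k}$ where $B$ is the integer formed by the $k$ lowest digits of $s(n)$; the digit restriction leaves only $|\mathcal{D}|^k$ admissible values of $B$, so these $n$ number $\ll x(|\mathcal{D}|/g)^k$. When $g^k\nmid\sigma(n)$, such $n$ are shown to be rare (Lemma \ref{keylemma}): any such $n$ has few unitary prime factors $p\equiv-1\pmod{g^\ell}$, and Rankin's method together with Siegel--Walfisz over the modulus $g^\ell$ with $\ell\asymp\log_3 x$ gives the bound $x\exp(-(\log_2x)^\delta)$. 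Incidentally, that lemma is also the natural tool to close your gap: since $g^v\mid\sigma(m)$ for all but $O(x\exp(-(\log_2 x)^\delta))$ values of $m\le x$, having $g^v\mid s(m)=\sigma(m)-m$ typically forces $g^v\mid m$, which immediately makes large $\gcd(s(m),g^v)$ rare. Without that (or an equivalent equidistribution input), your approach does not yield the theorem.
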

%{\color{magenta}K: A remark on why we have the condition that $0\in \mathcal{D}$. Also this should be added to the first line of the proof when recalling the setting. I believe this condition could be removed but I may be wrong as Cecile mentioned in the meeting on Monday. }

%%Lola: Maybe we should state our main theorem with 1+o(1) instead of gamma, as suggested by one of the referees?
%%CDo: I don't know, which version is better/clearer. But how it is stated now, either we need do delete the comment about c or intoduce the constant properly. 
Our main tool in the proof is an upper bound for the number of positive integers $n\leq x $ such 
$g^k\nmid \sigma (n)$ when $g^k$ is a large integer. Note that it was proved in \cite[Hauptsatz 2]{WatsonRamanujan} that the set of such integers has asymptotic density zero for fixed modulus $g^k$. By \cite[Theorem 2]{Pomerance77}, such a result can be made uniform in the modulus, see \cite[Lemma 2.1] {PollackArithmeticProperties}, stated as Lemma \ref{Pollacklemma} below. By sacrificing the uniformity, we obtain the following stronger upper bound.

\begin{lemma}\label{keylemma}
Let $g\geq 3$ be a given integer. Let $\gamma,\delta\in (0,1)$ and $A >0$ also be given. Then for integers 
$k \in \left[5 (\log_3 x ), A(\log_2 x)^\gamma\right]$, we have 
%%%$$\sum_{\substack{{n\le x}\\ {g^k\nmid \sigma (n)}}}1\ll x\exp \left(-c\left(\log\log x\right)^\delta\right).$$
$$\sum_{\substack{{n\le x}\\ {g^k\nmid \sigma (n)}}}1\ll x\exp \left(-\left(\log_2 x\right)^\delta\right), $$
where the constant implied by the $\ll$ notation depends on the choices of $g,A,\gamma,\delta$.
\end{lemma}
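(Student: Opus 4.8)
The plan is to convert the divisibility condition $g^{k}\mid\sigma(n)$ into a statement about the prime factorization of $n$, and then to recognize the failure of that statement as a downward large deviation that I can bound by Rankin's trick. Write $g=\prod_{p\mid g}p^{e_{p}}$, so that $g^{k}\mid\sigma(n)$ holds if and only if $v_{p}(\sigma(n))\ge ke_{p}$ for every prime $p\mid g$, where $v_{p}$ is the $p$-adic valuation. Since $\sigma$ is multiplicative and $\sigma(q)=q+1$, each prime $q\equiv -1\bmod{p}$ dividing $n$ to exactly the first power contributes a factor of $p$ to $\sigma(n)$. Writing $\mathcal{Q}_{p}$ for the set of primes $q\equiv -1\bmod{p}$ and $\Omega_{p}^{\ast}(n)=\#\{q\in\mathcal{Q}_{p}:q\|n\}$, I therefore have $v_{p}(\sigma(n))\ge\Omega_{p}^{\ast}(n)$, whence
\[
\sum_{\substack{n\le x\\ g^{k}\nmid\sigma(n)}}1\;\le\;\sum_{p\mid g}\#\{n\le x:\Omega_{p}^{\ast}(n)<ke_{p}\}.
\]
As $g$ has only finitely many prime divisors, it suffices to bound each summand on the right.

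Fix $p\mid g$ and put $t=ke_{p}$. By the Mertens estimate in the progression $-1\bmod{p}$, the mean of $\Omega_{p}^{\ast}$ is $E_{p}:=\sum_{q\in\mathcal{Q}_{p},\,q\le x}1/q=\frac{1}{p-1}\log_{2}x+O(1)$. The hypothesis $k\le A(\log_{2}x)^{\gamma}$ with $\gamma<1$ forces $t=o(\log_{2}x)=o(E_{p})$, so $\Omega_{p}^{\ast}(n)<t$ is a deviation well below the mean. First I would apply Rankin's trick: for $z\in(0,1)$,
\[
\#\{n\le x:\Omega_{p}^{\ast}(n)<t\}\le z^{-t}\sum_{n\le x}z^{\Omega_{p}^{\ast}(n)}.
\]
The function $f(n)=z^{\Omega_{p}^{\ast}(n)}$ is multiplicative with $f(q)=z$ for $q\in\mathcal{Q}_{p}$, $f(q)=1$ for $q\notin\mathcal{Q}_{p}$, and $f(q^{a})=1$ for $a\ge2$; in particular $0\le f\le1$. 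A standard upper bound for mean values of non-negative multiplicative functions bounded by $1$ (of Hall--Tenenbaum or Shiu type) then yields, with an absolute implied constant,
\[
\sum_{n\le x}z^{\Omega_{p}^{\ast}(n)}\ll x\exp\!\left(\sum_{q\le x}\frac{f(q)-1}{q}\right)=x\exp\!\left(-(1-z)E_{p}\right).
\]
Working directly with $\Omega_{p}^{\ast}$ (rather than with the count of distinct prime factors in $\mathcal{Q}_{p}$) is what lets me avoid any separate treatment of repeated prime factors, since the values $f(q^{a})=1$ for $a\ge2$ are harmless in the mean-value bound.

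Combining the last two displays gives $\#\{n\le x:\Omega_{p}^{\ast}(n)<t\}\ll x\exp(h(z))$ with $h(z)=t\log(1/z)-(1-z)E_{p}$. Optimizing, $h'(z)=0$ at $z=t/E_{p}\in(0,1)$, where $h(z)=-E_{p}+t+t\log(E_{p}/t)$. The delicate term is $t\log(E_{p}/t)$: using $t\le Ae_{p}(\log_{2}x)^{\gamma}$ and $E_{p}\asymp\log_{2}x$ gives $\log(E_{p}/t)\ll\log_{3}x$, so $t\log(E_{p}/t)\ll(\log_{2}x)^{\gamma}\log_{3}x=o(\log_{2}x)$, and likewise $t=o(\log_{2}x)$. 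Hence $h(z)=-E_{p}(1+o(1))\le -c_{p}\log_{2}x$ for some $c_{p}>0$ and all large $x$. Summing over the finitely many $p\mid g$ and setting $c=\min_{p}c_{p}>0$, I obtain $\sum_{n\le x,\,g^{k}\nmid\sigma(n)}1\ll x\exp(-c\log_{2}x)$, which is $\le x\exp(-(\log_{2}x)^{\delta})$ for large $x$ because $\delta<1$.

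The main obstacle will be the large-deviation step: one must harvest a saving of order $\log_{2}x$ in the exponent, and this succeeds precisely because the correction term $t\log(E_{p}/t)$ stays of smaller order than the mean $E_{p}\asymp\log_{2}x$. This is where the upper bound $k\le A(\log_{2}x)^{\gamma}$ with $\gamma<1$ is essential; it is also why the final saving degrades to $\exp(-(\log_{2}x)^{\delta})$ rather than a clean power of $\log_{2}x$. By contrast, only the upper bound on $k$ is genuinely needed for this argument, the lower bound $k\ge5\log_{3}x$ merely providing extra slack (it keeps $z=t/E_{p}$ from becoming extremely small). A secondary point deserving care is the uniformity of the mean-value estimate as $z\to0$; here I would invoke a version whose implied constant depends only on the uniform bounds $0\le f\le1$ and $\sum_{a\ge2}f(q^{a})/q^{a}\ll1$, both of which hold in our situation.
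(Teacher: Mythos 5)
Your proof is correct, and it takes a genuinely different route from the paper's, with a stronger conclusion. The paper also runs a Rankin-plus-mean-value argument, but it localizes at the single large modulus $g^{\ell}$ with $\ell\approx(1-\alpha)\log_3 x/\log g$: each squarefree prime factor $q\equiv-1\bmod{g^{\ell}}$ of $n$ contributes a full block $g^{\ell}$ to $\sigma(n)$, so only $m\approx k/\ell$ such primes are needed, but the available mean $\log_2 x/\varphi(g^{\ell})\approx(\log_2 x)^{\alpha}$ is small, which forces an appeal to the Siegel--Walfisz theorem (or Norton/Pomerance) for the Mertens sum in that progression and yields only the saving $\exp(-(\log_2 x)^{\alpha-\alpha'})$ --- just enough for the statement. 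You instead split $g^{k}\mid\sigma(n)$ into the conditions $v_p(\sigma(n))\ge ke_p$ for the finitely many fixed primes $p\mid g$ and track, for each, the number of primes $q\equiv-1\bmod p$ exactly dividing $n$; the threshold $ke_p\ll(\log_2 x)^{\gamma}$ sits far below the mean $E_p\asymp\log_2 x$, so classical Mertens in a fixed progression suffices and the large-deviation optimization delivers a saving of a genuine power of $\log x$, i.e.\ $x(\log x)^{-c}$ with $c=c(g,A,\gamma)>0$, which is strictly stronger than $x\exp(-(\log_2 x)^{\delta})$. Your observation that the hypothesis $k\ge 5\log_3 x$ is not needed in your argument is also accurate (in the paper it serves only to guarantee $\ell\le k$, so that $m\ge1$). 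The one step you should make fully explicit in a final write-up is the uniformity in $z$ of the mean-value bound; this is immediate from the same tool the paper uses (Tenenbaum's Corollary III.3.6, quoted as Lemma \ref{Tenenbaumnormalorder}, applied with $\lambda_1=\lambda_2=1$), whose implied constant is absolute for your $f$ with $0\le f\le1$.
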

%{\color{magenta} CDa : in this Lemma I have followed a remark of referee 2  who said that we can take $k\in [(\log_3 x)^2, B(\log_2 x)^\gamma ]$ instead of  $k\in [A(\log_2 x)^\gamma, B(\log_2 x)^\gamma ]$. In fact, I think that we can even take $k\in [5 (\log_3 x), B(\log x)^\gamma ]$. The only constraint for the lower bound of $k$ is that we must have $\ell \le k$ with $\ell =\lfloor (1-\alpha)\frac{\log _3 x}{\log g}\rfloor$. Are you agree with this? K: We think 5 can be replaced by 1.}

As this lemma does not rely on any facts about integers with missing digits, it may also be useful in other contexts.

One may wonder how sharp the upper bound in Theorem \ref{short theorem in intro} really is. We note that the statement can be re-written in the following form: For all sufficiently large $x$, the number of $n \leq x$ for which $s(n)$ has all of its digits in base $g$ restricted to digits in $\mathcal{D}$ is at most $$\frac{x}{\exp((\log_2 x)^{1 + o(1)})}.$$ Since $s(p) =1$ for all primes $p$, then whenever the set $\mathcal{D}$ contains $1$, it follows that the size of the preimage set of $\mathcal{D}$ has $\pi(x)$, the prime counting function, as a lower bound. Since $\pi(x) \sim \frac{x}{\log x}$ as $x \rightarrow \infty$, we see that our exponent of $1+ o(1)$ is optimal for arbitrary $g$ and $\mathcal{D}$.

%%Lola: We need to make the restatement of this lemma in section 3 have the same numbering as in the introduction. I will try to figure out how to do this later. 

%%Lola: I think we can delete the following paragraph now:

%In Section \ref{section:missing digits}, we will introduce notation that will allow us to make the statement of our main result more precise. In particular, we will carefully define the term ``integers with missing digits'' to indicate which digits we are excluding. We give more technical variants of our main result in Theorem \ref{theorem: upper bound set of missing digits} and its corollary. 

%\textbf{(LT: I think that we should find a way to state our Theorem 2.1 here instead. That way, the reader will know what the main goal of our paper is! This will require introducing $\#\mathcal{W}_\mathcal{D}$ a bit sooner. I'm currently struggling to figure out how to do this without introducing a ton of notation in our introduction...)}

%({\bf KB: One way to do it would be having the corollary here defining $\mathcal{A}$ in words without mentioning the set of the digits considered. I am not sure, may be as following:

%Let $\mathcal{A}$ be a set of integers with missing digits base $g\geq 3$. Then $\#s^{-1}(\mathcal{A})$ has asymptotic density zero. 
%We can then mention that in the following sections, we make a more precise statement of the main result, including defining the term 'integers with missing digits'. 

The proofs of all of the aforementioned theorems make crucial use of the special forms that the numbers in these sets possess. The methods do not generalize to arbitrary sets with asymptotic density zero. In a different direction, the EGPS Conjecture has also been shown to hold for all sets of $n \leq x$ with cardinality up to about $x^{1/2}$. More precisely we have the following theorem by Pollack, Pomerance, and the last author, where the result is uniform in the choice of the set as long as its size is small. 

%{\bf GC: Can we give an easy reason why the methods do not generalize to arbitrary sets of density 0?}.

\begin{theorem}\cite[Theorem 1.2]{PollackPomeranceThompson}\label{Theorem: PollackPomeranceThompson}
Let $\epsilon =\epsilon(x)$ be a fixed function tending to $0$ as $x\to\infty$. If $\mathcal{A}\subset\mathbb{N}$ with   $\#\left(\mathcal{A} \cap [1,x]\right) \ll x^{1/2 + \epsilon(x)}$ then $s^{-1}(\mathcal{A})$ has asymptotic density zero. 
\end{theorem}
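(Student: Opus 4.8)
The plan is to bound $N(x) := \#\{n \le x : s(n) \in \mathcal{A}\}$ and show $N(x) = o(x)$, exploiting the near-multiplicativity of $s$ under extraction of the largest prime factor $P(n)$. First I would discard the atypical $n$: the $y$-smooth integers up to $x$ (taking $y = x^{o(1)}$ with $\log x/\log y \to \infty$, so their number is $o(x)$) and the integers $n$ with $P(n)^2 \mid n$ (which number $O(x/\log x)$, since such $n$ force $P(n) \le \sqrt{x}$). Outside this negligible set every $n$ factors as $n = pm$ with $p = P(n)$ prime, $p \nmid m$, and $P(m) < p$, whence the identity
\[
s(pm) = (p+1)\sigma(m) - pm = p\,s(m) + \sigma(m)
\]
holds. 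Thus $s(n) \in \mathcal{A}$ becomes $p\,s(m) + \sigma(m) \in \mathcal{A}$, which I can analyze by fixing the cofactor $m$ and letting the large prime $p$ vary.

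For a fixed $m > 1$ the values $p\,s(m)+\sigma(m)$ are distinct as $p$ runs over primes, they lie in the single residue class $\sigma(m) \pmod{s(m)}$, and (since $pm \le x$ and $s(m) \le m$) they lie in $[1,2x]$. Hence the number of admissible $p$ for this $m$ is at most $\#\{a \in \mathcal{A}\cap[1,2x] : a \equiv \sigma(m) \!\!\pmod{s(m)}\}$. Observing that $\sigma(m) \equiv m \pmod{s(m)}$, summing over $m$ and reversing the order of summation yields
\[
N(x) \le o(x) + \sum_{a \in \mathcal{A}\cap[1,2x]} \#\bigl\{m \le x/y : s(m)\mid(a-m),\ \tfrac{a-m}{s(m)}-1 \text{ prime}\bigr\}.
\]
The inner count I would attack by splitting the cofactors according to the size of $s(m)$. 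When $s(m)$ is atypically small --- the primes, where $s(m)=1$, and the prime powers --- I would \emph{retain} the primality of $p$ and apply Brun--Selberg upper-bound sieve estimates of Goldbach type; combined with partial summation against the hypothesis $\#(\mathcal{A}\cap[1,t]) \ll t^{1/2+\epsilon(t)}$, this keeps their contribution comfortably below $x$. For composite $m$ I would use the crucial spacing $s(m) \ge \sqrt{m}$ (the smallest prime factor is $\le \sqrt{m}$, so the complementary divisor $m/P^{-}(m) \ge \sqrt{m}$ already contributes to $s(m)$), which makes $s(m)\mid(a-m)$ genuinely restrictive and limits the number of contributing $m \le x/y$ to roughly $\sqrt{x}$ per target $a$.

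The main obstacle is precisely this composite, large-$s(m)$ regime at the stated threshold. Balancing $\approx \sqrt{x}$ cofactors per target against the $\approx x^{1/2}$ available targets produces a bound of size $x^{1+\epsilon(x)}$, sitting exactly on the boundary; because $\epsilon(x)$ may tend to $0$ arbitrarily slowly (say $\epsilon(x)=1/\log_2 x$, for which $x^{\epsilon(x)}$ outgrows every quantity of the form $\exp((\log x)^c)$, $c<1$), a soft $x^{o(1)}$ saving is \emph{not} enough. What is genuinely required is a fixed power saving: a bound of the shape $x^{1/2-\delta}$, with $\delta>0$ independent of $\epsilon$, for the number of relevant decompositions feeding each target. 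I would extract this by using the primality of $p$ once more inside the composite regime --- the identity forces $p=(a-m)/s(m)-1$ to be prime with $pm\le x$, confining the cofactors $m$ to a sifted set --- together with the divisor-type structure of the equation $k\,\sigma(m)-(k-1)m=a$ obtained by writing $a-m=k\,s(m)$ and summing over the few admissible $k \le 2\sqrt{x}$. Once a fixed $\delta>0$ is secured, the total is $\ll x^{1+\epsilon(x)-\delta+o(1)}$, which is $o(x)$ (indeed with a power saving) as soon as $\epsilon(x)<\delta$, and this occurs eventually because $\epsilon(x)\to 0$. The exponent $1/2$ is exactly the break-even point of this balance, matching the hypothesis of the theorem.
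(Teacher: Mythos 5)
First, a point of order: the paper you were given does not prove this statement at all --- it is quoted verbatim from \cite{PollackPomeranceThompson}, so there is no in-text proof to match against; your proposal must therefore stand on its own, and it does not.

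Your opening reduction is the correct and standard one: discard smooth $n$ and $n$ with $P(n)^2\mid n$, write $n=pm$ with $p=P(n)\nmid m$, use $s(pm)=p\,s(m)+\sigma(m)$, observe $a\equiv\sigma(m)\equiv m\pmod{s(m)}$ with $p$ determined by $(a,m)$, and invoke $s(m)>\sqrt{m}$ for composite $m$. You also deserve credit for diagnosing the real difficulty: since $\epsilon(x)$ may decay arbitrarily slowly, $x^{\epsilon(x)}$ swamps any subpolynomial saving, so a fixed power saving is indispensable. But that is exactly the step you never supply, and the tools you gesture at demonstrably fall short. Writing $a-m=k\,s(m)$ with $k=p+1$, the spacing argument for $k\sigma(m)-(k-1)m=a$ (two solutions for the same $(a,k)$ differ by a multiple of $k$) yields at most $\min(x/(k-1),a)/k+1$ cofactors per pair $(a,k)$, and the ``$+1$'' term is fatal: your assertion that $k\le 2\sqrt{x}$ is unjustified, since $k=p+1$ is constrained only by $pm\le x$ and can be nearly $x/4$ when $m$ is a small composite, so the number of admissible $k$ per target can be as large as $x^{1-o(1)}$; summing ``one possible $m$ per $k$'' over $k$ and then over $\mathcal{A}$ produces totals of size $x^{3/2-c}$, not $o(x)$. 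The primality of $k-1$ saves only logarithms, and the equation $k\sigma(m)-(k-1)m=a$ does not factor, so no divisor-counting bound applies to it; your claimed ``roughly $\sqrt{x}$ cofactors per target'' is the heuristic $\sum_{m\le x/y} 1/s(m)\ll\sqrt{x/y}$, which presumes equidistribution of $m$ modulo $s(m)$ against an \emph{arbitrary} sparse set and is precisely what must be proved. Worse, a uniform per-target bound of strength $x^{1/2-\delta}$, which your plan requires, would amount to a fiber-type bound for $s$ stronger than anything known; the argument of \cite{PollackPomeranceThompson} necessarily exploits averaging over the sparse set $\mathcal{A}$ rather than a uniform per-$a$ estimate, and that averaging mechanism is the missing idea here.

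Two secondary slips should also be repaired. The inequality $s(m)\le m$ is false for abundant $m$ (indeed $\sigma(m)$ can be of order $m\log\log m$), so the targets lie in $[1,O(x\log\log x)]$ rather than $[1,2x]$ --- harmless for the hypothesis on $\mathcal{A}$, but the justification is wrong as stated. Likewise, integers with $P(n)^2\mid n$ are not $O(x/\log x)$ ``because $P(n)\le\sqrt{x}$'' (integers with $P(n)\le\sqrt{x}$ have positive density $\rho(2)$); what you need, and what is true, is that after restricting to $P(n)>y$ the count is at most $\sum_{p>y}x/p^2=o(x)$. Finally, the boundary problem you flag in the composite regime already afflicts your own smooth cutoff and your prime-$m$ case: with $y=x^{o(1)}$ chosen independently of $\epsilon$, the sieve saves only powers of $\log x$ and the leverage $a>p>y$ saves only $y^{1/2}=x^{o(1)}$, which $x^{\epsilon(x)}$ can again swamp. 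This particular defect is repairable --- take $y=x^{c}$ for a small fixed $c$, accept an exceptional set of $y$-smooth $n$ of density $\rho(1/c)$, and let $c\to0$ at the end, or choose $y$ adaptively in terms of $\epsilon(x)$ --- but as written even that case is incomplete, and the composite case is the heart of the theorem and remains unproved.
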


As a corollary, one can obtain (for example) that if $\mathcal{A}$ is the set of squares up to $x$ then $s^{-1}(\mathcal{A})$ has asymptotic density zero. Similarly, one can use Theorem \ref{Theorem: PollackPomeranceThompson} to prove that the preimage of a set of integers with `many' missing digits has density 0. For example, if we remove half of the possible digits then the size of the set of integers with missing digits will be about $O(x^{1/2}),$ and thus this is handled by Theorem \ref{Theorem: PollackPomeranceThompson}.
One can also deduce Theorem \ref{Theorem: PollackPalindromes} as a corollary of Theorem \ref{Theorem: PollackPomeranceThompson}, since the number of palindromes less than $x$ is $O(\sqrt{x})$.
%{\bf KB: Should't we have some condition on how many digits are missing in order to be able to use Theorem \ref{Theorem: PollackPomeranceThompson} for integers with missing digits? } 
%{\bf CD.: yes we can apply Theorem \ref{Theorem: PollackPomeranceThompson} when  $|{\mathcal D}|\le\sqrt{g}$. I will insert a remark after Corollary 2.2}

\vspace{0.2 in}

%{\bf Give a survey of recent results on checking the EGPS conjecture for specific sets (including the 2020 paper of Troupe, the two 2021 papers of Pollack/Roy, one of which also includes Leibowitz-Lockard as a co-author). GC: I think this comment is done and can go right? KB: Yes. I hide it here.}

\subsection*{Notation and conventions}
Throughout this paper, we will write $\#S$ to denote the number of elements in a set $S$. We will use $\sigma(n)$ to denote the \textit{sum-of-divisors function}, defined by $\sigma(n)\coloneqq \sum_{d\mid n} d$; $\varphi(n)$ to denote the \textit{Euler $\varphi$-function} for a positive integer $n$, defined by $\varphi(n)\coloneqq \#\{1\leq j \leq n: \gcd(n,j)=1\}$; $\operatorname{id}$ to denote the \textit{identity function}; and $\omega(n)$ to denote the \textit{number of distinct prime factors} of an integer $n$. We let $\mu(n)$ be the \textit{M\"obius function} defined as 
\begin{align*}
    \mu (n)\coloneqq\begin{cases}
        (-1)^r, &\text{ if}\, n=p_1\cdots p_r\, \text{with distinct primes } p_i,\\
        0, &\text{ if there exists a prime $p$ such that $p^2\mid n$.}
    \end{cases}
\end{align*}
For arithmetic functions $G$ and $H$, the \textit{convolution} $G\ast H$ is defined by 
$$G\ast H(n)\coloneqq \sum_{ab=n}G(a)H(b), $$
 for any positive integer $n$.

For two real functions $F$ and $G$ where $G$ is a nonnegative valued function, we say that $F(x)=O(G(x))$ if there exists a positive constant $C$ and a real number $x_0$ such that $|F(x)| \leq C |G(x)|$ for all $x \geq x_0$
and $F(x)=o(G(x))$ as $x\to a$ if 
$$\lim_{x\to a} \frac{F(x)}{G(x)}=0.$$
We will also at times use Vinogradov's notation $\ll$ as an alternative to Landau's Big $O$ notation. Namely, $F \ll G$ denotes that $F(x) = O(G(x))$. Similarly, $\gg$ is used to denote the parallel notion with the inequalities reversed in the Big $O$ definition.  We write $F \asymp G$ when there are positive constants $C_1$ and $C_2$ such that 
$C_1|F| < |G| < C_2 |F|$.
Furthermore we let $\log_k(x)$ denote the $k$-th iterate of the natural logarithm of $x$, e.g., $\log_3 x=\log\log\log x$. %{\bf GC: Do we want to use this notation then I will change it in the file.}

\section{Integers with missing digits}\label{section:missing digits}

%\textbf{Background on sets of missing digits, %including (but not limited to) the references %\cite{BanksShparlinski2004,Bourgain2015,Dartyge %Mauduit2000,ErdosMauduitSarkozy98,ErdosMauduitSarkozy99}.}\\

Let $g\in\N$, $g\geq 3$. We consider the base $g$ expansion of a positive integer $n$,
$$ n= \sum_{j\geq 0}\varepsilon_j(n)g^j,$$
%{\bf GC: What is this $N$ in the upper bound of %the sum? Later we have the sums up to infinity. %We should be consistent here. Since the %$\varepsilon_j(n)$ can be zero I would get rid of %the $N$ here as well. }
 with coefficients $\varepsilon_j(n)\in\{0,\dots,g-1\}$. Note that this sum is finite. For a proper subset $\mathcal{D}\subsetneq \{0,\dots,g-1\}$ such that $0\in\mathcal{D}$, and an arithmetic function $f$ taking positive integer values, we put 
 \begin{align}\label{def:WD}
 \mathcal{W}_{f,\mathcal{D}}\coloneqq \left\{ n :  f(n)= \sum_{j\geq 0}\varepsilon_j(f(n)) g^j, \varepsilon_j(f(n))\in\mathcal{D}\right\} 
 \end{align}
as the set of integers $n$ where the digits of $f(n)$ are restricted in the set $\mathcal{D}$.

In addition, for any $x\ge 1$ and any set of integers $A$ let $A(x)$ denote the set $A \cap [1,x]$. So for any $x\geq 2$, we put 
 $$\mathcal{W}_{f,\mathcal{D}}(x)\coloneqq \left( f^{-1}(\mathcal{W}_{\mathcal{D}}) \right)(x)=\left\{ n\leq x :  f(n)= \sum_{j\geq 0}\varepsilon_j(f(n)) g^j, \varepsilon_j(f(n))\in\mathcal{D}\right\}$$
 as a finite subset of  $\mathcal{W}_{f,\mathcal{D}}$.

In the particular case $f=\operatorname{id}$
%, where $\operatorname{id}$ is the identity function,
we  simply write $\mathcal{W}_{\mathcal{D}}$ (resp. $\mathcal{W}_{\mathcal{D}}(x)$) in place of $\mathcal{W}_{\operatorname{id},\mathcal{D}}$ (resp. $\mathcal{W}_{\operatorname{id},\mathcal{D}}(x)$).
 The elements of $\mathcal{W}_{\mathcal{D}}$ are frequently referred to as \textit{integers with missing digits} (or \textit{integers with restricted digits}).
 In this survey we will also use  the terminology proposed by  Mauduit, who referred to them as \textit{ellipsephic\footnote{The origin of this nomenclature comes from the fusion of the two Greek words ``ellipsis'' $=$ missing and ``psiphic'' $=$ digit.} integers}.
 %(cf. \cite[page 12]{Col2006} for a detailed explanation of the word ellipsephic) 
 %\textbf{CDo: Would someone mind if instead of giving a reference to the origin of the word elliphesic, I'd write a footnote with the explanation? GC: I would actually prefer this. Perhaps you can put the reference in the footnote tho. CDa: I am agree with both your two suggestions GC: I added it.}. 

Since  the set $\mathcal{D}$ is a proper subset of $ \{0,\dots,g-1\}$, and  since we have \begin{align}\label{number of elements of WD}
\#\mathcal{W}_{\mathcal{D}}\left(g^N-1\right)=\left|\mathcal{D}\right|^N,
\end{align}
the elements of $\mathcal{W}_{\mathcal{D}}$ form a sparse set, i.e.,
$$\lim_{N\rightarrow\infty} \frac{\#\mathcal{W}_{\mathcal{D}}\left(g^N\right)}{g^N}=0. $$
In other words, the set $\mathcal{W}_{\mathcal{D}}$ has asymptotic density zero.

% Since  the set $\mathcal{D}$ is a proper subset of $ \{0,\dots,g-1\}$, the elements of $\mathcal{W}_{\mathcal{D}}$ form a sparse set, i.e.
% \begin{align}\label{number of elements of WD}
%\#\mathcal{W}_{\mathcal{D}}\left(g^N-1\right)=\left|\mathcal{D}\right|^N.
%\end{align}
 %The set  $\mathcal{W}_{\mathcal{D}}$ has asymptotic density zero, which means that
%$$\lim_{N\rightarrow\infty} \frac{\#\mathcal{W}_{\mathcal{D}}\left(g^N\right)}{g^N}=0. $$

%{\bf KB: I made a change above as describing sparse set we also mean the set having asymptotic density zero. I keep the previous version hidden for a comparison. }

%\textbf{Background on sets of missing digits, including (but not limited to) the references %\cite{BanksShparlinski2004,Bourgain2015,Dartyge Mauduit2000,ErdosMauduitSarkozy98,ErdosMauduitSarkozy99}.}\\

When  $0\not\in\mathcal{D}$, we can adapt the definition \eqref{def:WD} by setting
$$\mathcal{W}_{f,\mathcal{D}}\coloneqq \left\{ n:  f(n)= \sum_{j= 0}^N\varepsilon_j(f(n)) g^j, \varepsilon_j(f(n))\in\mathcal{D},\ N\in\N\right\}.$$

In this section, as we survey the literature, we concentrate on the case with $0\in\mathcal{D}$ in order to avoid some complications in the statements of some results. However, the interested reader can find some results related to the sets $\mathcal{W}_{\mathcal{D}}$ with $0\not\in\mathcal{D}$ in \cite{Aloui15}. In our main theorem, we do not require $0\in\mathcal{D}$.

If we set $g=10$ and $\mathcal{D}=\{3,6,9\}$, then any number in $\mathcal{W}_{f,D}$ is divisible by $3$. However, if we exclude similar trivial obstructions, we can expect that the sequence of ellipsephic integers behaves like the sequence of the natural numbers.
A first question could be whether these integers are well-distributed in arithmetic progressions. 
Erd\H os, Mauduit, and S\'ark\"ozy \cite{ErdosMauduitSarkozy98} give an affirmative answer.
Their result is valid under the following two natural hypotheses for the set  $\mathcal{D}=\{d_1,d_2,\dots,d_t\}$, namely
\begin{align}\label{Hyp:D}
d_1=0\in\mathcal{D} \text{ and } \gcd(d_2,\dots,d_t)=1.
\end{align}
For $a,q\in\Z$ such that $\gcd(q,g(g-1))=1$, we introduce the set of the ellipsephic integers congruent to $a$ modulo $q$ by 
$$\mathcal{W}_{\mathcal{D}} (x,a,q) \coloneqq \left\{ n\in\mathcal{W}_{\mathcal{D}}(x) : n\equiv a\bmod{q}\right\}.$$
Erd\H os, Mauduit, and S\'ark\"ozy proved that if $\mathcal{D}$ satisfies \eqref{Hyp:D}, then there exist constants $c_1\coloneqq c_1(g,t)>0$ and $c_2\coloneqq c_2(g,t)>0$ such that 
\begin{align}\label{EMS}
\left|\#\mathcal{W}_{\mathcal{D}} (x,a,q)-\frac{\#\mathcal{W}_{\mathcal{D}} (x)}{q}\right|=O\left( 
\frac{\#\mathcal{W}_{\mathcal{D}} (x)}{q}\exp \left(-c_1 \frac{\log x}{\log q}\right)\right),
\end{align}
for all $a\in\Z$ and $q\le \exp (c_2\sqrt{\log x})$ such that $\gcd(q,g(g-1))=1$ (see \cite[Theorem 1]{ErdosMauduitSarkozy98}).
This result was improved by Konyagin \cite{Konyagin2001} in $2001$ and by Col \cite{Col2009} in $2009$.

The papers \cite{ErdosMauduitSarkozy98} and \cite{ErdosMauduitSarkozy99} provide several interesting applications
of these equidistribution results and give lists of open problems that inspired various research projects. 
%{\bf GC: Perhaps it would be nice to give some examples here.}

Another interesting aspect is the normal order of some arithmetic functions along ellipsephic integers.  
Banks and Shparlinski \cite{BanksShparlinski2004} obtained various results in this direction. In particular they studied 
 the average values of 
$\mathcal{W}_{\mathcal{D}}$ of  the Euler $\varphi$-function and the sum-of-divisors function.

Equation \eqref{EMS} can be seen as an analogue of the Siegel--Walfisz Theorem, stated below as Lemma \ref{SiegelWalfisz}, for primes in arithmetic progressions.
%{\bf Add reference?}
Such theorems are not applicable when the modulus $q$ is a power of $x$.
 However, in many applications, it is sufficient to use an equidistribution result that is averaged over the moduli $q$. In doing so, one is able to extend the range of $q$.
%\textcolor{blue}{However, in many applications, an equidistribution result averaged over the modulus $q$ is sufficient.}
%K: I liked this transition, so I removed the color.
One such application can be seen in work by the third author and Mauduit \cites{DartygeMauduit2000,DartygeMauduit2001}, and independently by Konyagin \cite{Konyagin2001}. They proved that there exists an $\alpha \coloneqq \alpha (g,\mathcal{D})$ such that 
\eqref{EMS} holds for almost all $q<x^{\alpha}$ satisfying $\gcd(q,g(g-1))=1$.
Such results combined with sieve methods imply the existence of  ellipsephic integers with few prime factors.
For example in \cite{DartygeMauduit2000} it is proved that there exist infinitely many $n\in\mathcal{W}_{\{ 0,1\}}$ with at most $k_g = (1+o(1))8g/\pi$ prime factors as $g\to \infty$.
%$k_g$ prime factors with $k_g = (1+o(1))8g/\pi$ (when the basis $g\rightarrow \infty$).

The problem of the existence of infinitely many primes with missing digits has been solved recently by Maynard \cites{Maynard19,Maynard22}.
In particular, he proved the following spectacular result, given by \cite[Theorem 1.1]{Maynard19}.
Let $a_0\in\{ 0,...,9\}$. The number of primes $p\le x$ with no digit $a_0$ in their base $10$ expansions is
$$\asymp \frac{x^{\frac{\log 9}{\log 10}}}{\log x}.$$
He also gives a condition to determine whether there are finitely or infinitely many $n$ such that $P(n)\in\mathcal{W}_{\mathcal{D}}$, for any given non-constant polynomial $P\in\Z [X]$, large enough base $g$, and $\mathcal{D}=\{ 0,\ldots,g-1\}\setminus\{a_0\}$ (see \cite[Theorem 1.2]{Maynard22}).
The papers \cite{Maynard19} and \cite{Maynard22} also provide deep results when the number of excluded digits is $\ge 2$.

 We end this short discussion on ellipsephic integers  
  by giving an incomplete list of recent references on this subject containing many other interesting results, namely  
 \cites{Aloui15,Biggs21,Biggs23,Col2009,KonyaginMauduitSarkozy}.

In the present paper, we are particularly interested in the preimages of the sets of ellipsephic integers under $s(n)$, namely $\mathcal{W}_{s,\mathcal{D}}(x)=\left(s^{-1}\left(\mathcal{W}_{\mathcal{D}}\right)\right)(x)$.  With this setup, our main result can be reformulated in the following manner:
	Let $g\geq 3$ be an integer and $\mathcal{D}\subsetneq \{0,\dots,g-1\}$ be a nonempty proper subset. Let $0<\gamma<1$ be given. Then for all sufficiently large $x$, %there exists a constant $c$ depending on $g$ and $\gamma$ %such that
 we have
 %%%$$ \#\mathcal{W}_{s,\mathcal{D}}(x) =\#s^{-1}\left(\mathcal{W}_{\operatorname{id},\mathcal{D}}(x)\right)\leq C_g \frac{x}{(\log x)^{1/g^{\log\log\log x}}}.$$
%%$$ \#\mathcal{W}_{s,\mathcal{D}}(x) =\#s^{-1}\left(\mathcal{W}_{\mathcal{D}}(x)\right)\ll x\exp (-c(\log\log x)^\gamma).$$
$$ \#\mathcal{W}_{s,\mathcal{D}}(x) =\#\left(s^{-1}\left(\mathcal{W}_{\mathcal{D}}\right)\right)(x)\ll x\exp (-(\log_2 x)^\gamma).$$
	%In particular, this upper bound does not  depend on %the choice of $\mathcal{D}$.

%{\bf CDa: our previous bound $x/(\log x)^{1/g^{\log\log\log x}}$ was a trivial bound because 
%$$(\log x)^{1/g^{\log\log\log x}} =\exp \left (\frac{\log\log x}{g^{\log\log\log x}}\right ) =\exp \left ( \frac{\log\log x}{(\log\log x)^{\log g}}\right )$$ 
%which is close to $1$ when $g\ge 3$ and  $x\rightarrow +\infty$. GC: I agree.}

%\noindent {\bf Remark.} If $|{\mathcal D}|\le \sqrt{g}$,
%then $|W_{{\mathcal D}}(x)|\ll x^{\frac{\log {|\mathcal{D}|}}{\log g}}\ll x^{1/2}.$
%So, in this case, the qualitative version of our main result follows from Theorem \ref{Theorem: PollackPomeranceThompson}. 
%{\bf GC: I don't see this immediately. This means we must have
%$$C_g \left (\frac{\log\log\log \left(g^N\right)}{\log\log \left(g^N\right)}\right )^{\frac{\log (g/|\mathcal{D}|)}{\log g}} \to 0$$
%as $N\to\infty$, right? Why is this true? KB: Yes. We have a positive power of a quantity approaching to 0 as $N\rightarrow\infty$.}

%%%%%I suggest we have the general form and the general proof in the paper. 

%%%%%\begin{proposition}
%%%%	If $\mathcal{A}$ is the set of integers missing the digit $g-1$ in base $g$, then $s^{-1}(\mathcal{A})$ has asymptotic density zero. In other words, for $\mathcal{D}\coloneqq\{0,\dots,g-2\}$, we have that
%% $$\lim_{N\rightarrow\infty} \frac{\# s^{-1}%%%(\mathcal{A}_{\mathcal{D}}\cap[1,g^N])}{g^N}= 0.$$
%\end{proposition}

\section{Preliminary Lemmata for Theorem \ref{short theorem in intro}}

We start with a well-known deep result on primes in arithmetic progressions.

\begin{lemma}[Siegel--Walfisz Theorem, \cite{TenenbaumITAN22}*{Theorem II.8.17, page 376}] \label{SiegelWalfisz}
%{\bf GC: I put the book in the references folder and watched it up. This Lemma is \cite{TenenbaumITAN22}*{Theorem 8.16} on page 375!! I think when we give an explicit number for the Theorem we don't need to add a page in addition. KB: Thanks Giulia. I was using an older edition of the same book. But, I think we should keep the page number in the book references. This is a long book. We can also use some other books as reference for this specific theorem, however you decide. Also a quick note, in Tenenbaum's book we also need the chapter included in theorem's numbers. Here, I am referring to Theorem II.8.17 on the next page (page 376) with the same error term stated here as it is the classical version (even though the one you mentioned is also enough for our purposes). GC: Perfect, thanks. You are right.}
For any constant $A>0$, and uniformly for $x\geq 3$, $1\leq q\leq (\log x)^A$, and $\gcd (a,q)=1$, we have 
$$ \sum_{\substack{p\leq x\\p\equiv a\bmod q}}\log p=\frac{x}{\varphi(q)} +O\left(x\exp\left(-c\sqrt{\log x}\right)\right),$$
where $c=c(A)$ is a strictly positive constant.
\end{lemma}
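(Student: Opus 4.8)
The plan is to deduce this from the standard analytic theory of Dirichlet $L$-functions, so that both the uniformity in the range $q\le(\log x)^A$ and the error term $x\exp(-c\sqrt{\log x})$ emerge from a zero-free region for $L(s,\chi)$ together with Siegel's theorem on exceptional real zeros. Throughout I would write $\psi(x,\chi)=\sum_{n\le x}\Lambda(n)\chi(n)$, where $\Lambda$ is the von Mangoldt function, and note that the sum $\sum_{p\le x,\,p\equiv a\,(q)}\log p$ differs from $\frac1{\varphi(q)}\sum_{\chi}\bar\chi(a)\psi(x,\chi)$ only by the contribution of prime powers, which is $O(\sqrt x\,(\log x)^2)$ and hence absorbed into the stated error.

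First I would apply orthogonality of the characters modulo $q$ to write, for $\gcd(a,q)=1$,
$$\sum_{\substack{n\le x\\ n\equiv a\,(q)}}\Lambda(n)=\frac{1}{\varphi(q)}\sum_{\chi \bmod q}\bar\chi(a)\,\psi(x,\chi).$$
The principal character $\chi_0$ contributes $\frac1{\varphi(q)}\psi(x,\chi_0)$; since $\psi(x,\chi_0)$ agrees with $\psi(x)$ up to $O(\log q\,\log x)$, the Prime Number Theorem with classical error term (itself a consequence of the zero-free region for $\zeta$) gives $\psi(x,\chi_0)=x+O(x\exp(-c_0\sqrt{\log x}))$, producing the main term $x/\varphi(q)$. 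It then remains to bound $\psi(x,\chi)$ for each nonprincipal $\chi$ by $O(x\exp(-c\sqrt{\log x}))$, uniformly for $q\le(\log x)^A$.

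For this I would pass to primitive characters (which alters $\psi(x,\chi)$ by only $O((\log qx)^2)$) and invoke the truncated explicit formula: for a parameter $T\ge 2$,
$$\psi(x,\chi)=-\sum_{\substack{\rho\\ |\im\rho|\le T}}\frac{x^\rho}{\rho}+O\!\left(\frac{x(\log qx)^2}{T}+\log x\right),$$
where $\rho$ runs over the nontrivial zeros of $L(s,\chi)$. The key input is the classical zero-free region: there is an absolute $c_1>0$ with $L(s,\chi)\neq 0$ for $\re s>1-c_1/\log\!\big(q(|\im s|+2)\big)$, apart from at most one simple real zero $\beta_1=\beta_1(\chi)<1$ attached to a real character. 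Together with the density estimate $\#\{\rho:|\im\rho-t|\le 1\}\ll\log(q(|t|+2))$, the sum over zeros with $|\im\rho|\le T$ is $\ll x\exp\!\big(-c_2\log x/\log(qT)\big)(\log qT)^2$, aside from the term $x^{\beta_1}/\beta_1$ of a possible exceptional zero.

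Finally I would take $T=\exp(\sqrt{\log x})$. Since $q\le(\log x)^A$ gives $\log q=o(\sqrt{\log x})$, we have $\log(qT)=(1+o(1))\sqrt{\log x}$, so the zero-sum and the explicit-formula error are both $\ll x\exp(-c_3\sqrt{\log x})$. The genuinely hard point, and the main obstacle, is the exceptional zero $\beta_1$. Here I would invoke Siegel's theorem: for every $\varepsilon>0$ there is a (necessarily \emph{ineffective}) constant $c(\varepsilon)>0$ with $\beta_1<1-c(\varepsilon)q^{-\varepsilon}$. Choosing $\varepsilon=1/(2A)$ and using $q\le(\log x)^A$ yields $q^{-\varepsilon}\ge(\log x)^{-1/2}$, whence $x^{\beta_1}\le x\exp(-c(\varepsilon)q^{-\varepsilon}\log x)\le x\exp(-c(\varepsilon)\sqrt{\log x})$. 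This controls the Siegel zero and completes the estimate, with the final $c=c(A)$ inheriting the ineffectivity of Siegel's theorem. It is precisely this potential exceptional zero that blocks both an effective constant and any substantial enlargement of the range of $q$.
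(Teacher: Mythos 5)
The paper does not prove this lemma; it is quoted verbatim as a known result from Tenenbaum's book (Theorem II.8.17 there), so there is no internal proof to compare against. Your outline is the standard proof of Siegel--Walfisz --- orthogonality of characters, the truncated explicit formula, the classical zero-free region, and Siegel's ineffective bound $\beta_1<1-c(\varepsilon)q^{-\varepsilon}$ with $\varepsilon=1/(2A)$ to handle the exceptional zero --- and it is essentially the argument given in the cited reference; all the steps and parameter choices (in particular $T=\exp(\sqrt{\log x})$ and the observation that $\log q=o(\sqrt{\log x})$ in the stated range) are correct.
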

{\bf Remarks.} We will apply this lemma for some $q=g^\ell$ with $\ell\approx \log_3 x$. Norton \cite{Norton} and Pomerance \cite{Pomerance77} independently proved that there exists a constant $C>0$ such that for all $x\ge 3$, and for all integers $q$, $a$ with $\gcd(a,q)=1$, $q>0$ we have\footnote{The interested reader may find a more precise formulation in \cite[Section 6]{Norton}  and in \cite[Remark 1]{Pomerance77}.}
$$\Bigg |\sum_{\substack{{p\le x}\\ {p\equiv a \bmod q}}}\frac{1}{p}-\frac{\log_2 x}{\varphi (q)}\Bigg |\le C.$$
We could apply these results instead of the Siegel--Walfisz Theorem in the proof of Lemma \ref{keylemma}.
Another remark is when $q$ has the special form $q=g^c$ for some fixed $g$ (as is the case in our application), 
Elliott \cite{Elliott} and then Baker and Zhao \cite{BakerZhao} proved that
it is possible to have asymptotic estimates even when the size of $g^\ell$ is a power of $x$. Baker and Zhao proved that if $q=g^\ell$ with fixed $g$ 
then it is possible to obtain a result similar to Lemma \ref{SiegelWalfisz}
in the range $g^\ell \le x^{5/12-\varepsilon }$ with $\varepsilon >0$ arbitrarily small.

Recall that a \textit{multiplicative function} $f$ is a function that satisfies $f(uv)=f(u)f(v)$ whenever $\gcd(u,v)=1$. In particular, if $f$ is a multiplicative function which is not identically zero, then we have $f(1)=1$. Next, we  quote the following technical result for the average order of a multiplicative function. 
\begin{lemma}\label{Tenenbaumnormalorder}\cite[Corollary III.3.6, page 457]{TenenbaumITAN22} %{\bf GC: This is \cite[Corollary 3.6]{TenenbaumITAN22} on page 457!! Again I would get rid of the page.}
Let $\lambda_1,\lambda_2$ be constants such that $\lambda_1 >0$ and $0\le\lambda_2<2$.
For any multiplicative function $f$ such that 
$$0\le f(p^\nu)\le\lambda_1\lambda_2^{\nu-1}$$
for all primes $p$ and for $\nu=1,2,3, \dots,$ we uniformly have 
\begin{align}\label{boundsumf}
\sum_{n\le x}f(n)\ll x\prod_{p\le x}\left ( 1-\frac{1}{p}\right )\sum_{\nu\ge 0}
\frac{f(p^\nu)}{p^\nu}  
\end{align}
for all $x\geq 1$.
The implicit constant in \eqref{boundsumf} is less than
$$4(1+9\lambda_1+\lambda_1\lambda_2/(2-\lambda_2)^2).$$
\end{lemma}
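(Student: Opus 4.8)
The plan is to reduce the claimed Euler-product bound to the \emph{fundamental} Halberstam--Richert-type inequality
\[
\sum_{n\le x}f(n)\ll \frac{x}{\log x}\sum_{n\le x}\frac{f(n)}{n},
\]
and then to convert its right-hand side into the stated product by elementary means. For the conversion, note that every $n\le x$ factors into prime powers $p^\nu$ with $p\le x$, so that, since $f\ge 0$,
\[
\sum_{n\le x}\frac{f(n)}{n}\le \prod_{p\le x}\sum_{\nu\ge 0}\frac{f(p^\nu)}{p^\nu},
\]
the product converging because $f(p^\nu)\le\lambda_1\lambda_2^{\nu-1}$ with $\lambda_2<2$. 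Mertens' theorem supplies $1/\log x\ll\prod_{p\le x}(1-1/p)$, and multiplying the two displays yields exactly $x\prod_{p\le x}(1-1/p)\sum_{\nu\ge 0}f(p^\nu)/p^\nu$. The substantial part is therefore the fundamental inequality.

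To prove that, I would write $T(x)=\sum_{n\le x}f(n)$ and $S(x)=\sum_{n\le x}f(n)\log n$ and begin from the identity $\log n=\sum_{d\mid n}\Lambda(d)$, where $\Lambda$ is the von Mangoldt function. Grouping $n$ by the exact power of each prime dividing it and using multiplicativity (writing $n=p^a m$ with $p\nmid m$ and $f(n)=f(p^a)f(m)$) gives
\[
S(x)=\sum_{p^k\le x}(\log p)\sum_{\substack{n\le x\\ p^k\mid n}}f(n)\le \sum_{p^a\le x}a(\log p)f(p^a)\,T\!\left(\frac{x}{p^a}\right),
\]
where the inner sum over $m$ has been bounded trivially by $T(x/p^a)$. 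I would pair this with the elementary identity
\[
T(x)\log x=S(x)+\sum_{n\le x}f(n)\log\frac{x}{n}=S(x)+\int_1^x \frac{T(t)}{t}\,dt,
\]
the last step by partial summation. Substituting the bound for $S(x)$ produces a self-improving (Levin--Fainleib type) integral inequality for $T$, in which $T$ enters only through $\int_1^x T(t)\,dt/t$ and through the weights $\sum_{p^a\le x}a(\log p)f(p^a)/p^a$.

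The hypothesis enters precisely here: $f(p)\le\lambda_1$ controls $\sum_{p\le x}f(p)(\log p)/p$ through Mertens, while $f(p^\nu)\le\lambda_1\lambda_2^{\nu-1}$ with $\lambda_2<2$ guarantees convergence of the prime-power tail $\sum_p\sum_{\nu\ge 2}\nu(\log p)f(p^\nu)/p^\nu$, the constraint $\lambda_2<2$ being needed at the worst prime $p=2$. Feeding these estimates into the integral inequality and integrating it out---equivalently, bootstrapping $T(x)/x$ against $\sum_{n\le x}f(n)/n$ over dyadic ranges---yields $T(x)\ll\frac{x}{\log x}\sum_{n\le x}f(n)/n$, which completes the reduction.

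I expect the main obstacle to be the rigorous resolution of the integral inequality together with the extraction of the explicit constant $4(1+9\lambda_1+\lambda_1\lambda_2/(2-\lambda_2)^2)$: the convolution step and the Mertens estimates each lose constants that must be tracked rather than absorbed, and the appearance of $\lambda_1\lambda_2/(2-\lambda_2)^2$ signals that the prime-power tail has to be summed exactly, not merely bounded crudely. One must also check the inequality uniformly for all $x\ge 1$; for bounded $x$ this is routine, since both sides are $O(1)$ and the finitely many cases can be absorbed into the implied constant, so the genuine content lies entirely in the large-$x$ regime.
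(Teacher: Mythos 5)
The paper does not actually prove this lemma: it is quoted directly from Tenenbaum's book (Corollary III.3.6, deduced there from the Halberstam--Richert inequality, Theorem III.3.5), so your proposal must be judged against that textbook argument. The reduction half of your plan is correct and is exactly Tenenbaum's deduction: by nonnegativity and multiplicativity $\sum_{n\le x}f(n)/n\le\prod_{p\le x}\sum_{\nu\ge 0}f(p^\nu)p^{-\nu}$ (each factor finite since $\lambda_2<2\le p$), and Mertens in the form $1/\log x\ll\prod_{p\le x}(1-1/p)$ converts $\frac{x}{\log x}\sum_{n\le x}f(n)/n$ into the right-hand side of \eqref{boundsumf}. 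Your observations about where $\lambda_2<2$ bites (the prime $p=2$) and about the shape of the constant are also sound.

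The genuine gap is in your sketch of the fundamental inequality itself. You say the hypothesis enters because ``$f(p)\le\lambda_1$ controls $\sum_{p\le x}f(p)(\log p)/p$ through Mertens,'' and you let $T$ enter the inequality only through the weights $\sum_{p^a\le x}a(\log p)f(p^a)/p^a$, i.e.\ you have already replaced $T(x/p^a)$ by the trivial bound $\frac{x}{p^a}\sum_{m\le x}f(m)/m$. That route yields only $S(x)\le(\lambda_1\log x+O(1))\,x\sum_{n\le x}f(n)/n$, hence $T(x)\ll x\sum_{n\le x}f(n)/n$ --- a full factor of $\log x$ weaker than the fundamental inequality, and with that loss \eqref{boundsumf}, whose right-hand side is of size $\frac{x}{\log x}\prod_{p\le x}\sum_\nu f(p^\nu)p^{-\nu}$, does not follow. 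No dyadic bootstrapping as you describe can recover the loss, since $T$ appears only through the already-lossy weights. The correct step (Halberstam--Richert, as in the proof of Tenenbaum's Theorem III.3.5) is to keep $T(x/p)=\sum_{m\le x/p}f(m)$ intact in the $a=1$ term and \emph{interchange the order of summation}: $\sum_{p\le x}f(p)(\log p)\,T(x/p)=\sum_{m\le x}f(m)\sum_{p\le x/m}f(p)\log p\le \lambda_1\sum_{m\le x}f(m)\sum_{p\le x/m}\log p\ll\lambda_1\,x\sum_{m\le x}f(m)/m$, using the linear Chebyshev-type bound $\sum_{p\le y}\log p\ll y$ pointwise at $y=x/m$, not the logarithmic Mertens bound at $y=x$. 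The $a\ge 2$ terms may then be handled exactly as you indicate (this is where $\lambda_1\lambda_2/(2-\lambda_2)^2$ arises), and $T(x)\log x\le x\sum_{n\le x}f(n)/n+S(x)$ closes the proof in a single step, with no integral-inequality iteration needed.
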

%{\bf KB: Do we want to keep the bound for the implicit constant in the lemma above?}
%Using the above result, we obtain an upper bound for the number of positive integers $n\leq x $ such $g^k\nmid \sigma (n)$ when $g^k$ is a large integer. 
%This type of bounds were considered before, for example we have the following result stated in the following form in \cite[Lemma 2.1]{PollackArithmeticProperties} which follows from a more general result by Pomerance \cite[Theorem 2]{Pomerance77}. Note that, by sacrificing the uniformity, we will be able to obtain a better bound for the number of positive integers $n\leq x $ such $g^k\nmid \sigma (n)$ when $g^k$ is a large integer.
%Note that it was proved in \cite[Hauptsatz 2]{WatsonRamanujan} that the set of such integers has asymptotic density zero for fixed modulus $g^k$. As a result of \cite[Theorem 2]{Pomerance77}, one can recover uniformity in modulus as follows. 

\begin{lemma}\cite[Lemma 2.1]{PollackArithmeticProperties}\label{Pollacklemma}
Let $x\geq 3$. Let $q$ be a positive integer. Then 
$$\sum_{\substack{{n\le x}\\ {q \nmid \sigma (n)}}}1\ll \frac{x}{(\log x)^{1/\varphi(q)}},$$
    uniformly in $q$. 
\end{lemma}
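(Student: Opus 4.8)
The plan is to dominate the non-multiplicative indicator $\mathbf 1[q\nmid\sigma(n)]$ by a multiplicative majorant and then estimate its summatory function via Lemma \ref{Tenenbaumnormalorder}. The starting point is an elementary divisibility criterion. Write $\sigma(n)=\prod_{p^{a}\parallel n}\sigma(p^{a})$, where $p^{a}\parallel n$ means $p^{a}\mid n$ but $p^{a+1}\nmid n$. If some prime $p\equiv -1\bmod{q}$ satisfies $p\parallel n$, then $\sigma(p)=p+1\equiv 0\bmod{q}$, and since $\sigma(p)$ is one of the factors in the product for $\sigma(n)$, we obtain $q\mid\sigma(n)$. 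Contrapositively, every $n$ counted on the left-hand side must avoid all primes $p\equiv -1\bmod{q}$ to the exact first power; that is, for each such prime one has $v_{p}(n)=0$ or $v_{p}(n)\ge 2$.

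First I would encode this local condition as a multiplicative function. Define $f$ by $f(p^{a})=1$ for all $a\ge 0$ when $p\not\equiv -1\bmod{q}$, and, when $p\equiv -1\bmod{q}$, by $f(p)=0$ together with $f(p^{a})=1$ for $a=0$ or $a\ge 2$. Then $f$ is multiplicative, takes values in $\{0,1\}$, and the criterion above gives $\mathbf 1[q\nmid\sigma(n)]\le f(n)$. Since $0\le f(p^{\nu})\le 1$, Lemma \ref{Tenenbaumnormalorder} applies with $\lambda_{1}=\lambda_{2}=1$, yielding a constant that is absolute (hence independent of $q$) and the bound
$$\sum_{\substack{n\le x\\ q\nmid\sigma(n)}}1\;\le\;\sum_{n\le x}f(n)\;\ll\;x\prod_{p\le x}\Bigl(1-\tfrac1p\Bigr)\sum_{\nu\ge 0}\frac{f(p^{\nu})}{p^{\nu}}.$$
A direct computation of the local factors is the next step. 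For $p\not\equiv -1\bmod{q}$ the inner sum is $(1-1/p)^{-1}$, so the factor is $1$. For $p\equiv -1\bmod{q}$ the inner sum equals $1+\frac{1}{p(p-1)}$, and multiplying by $1-\tfrac1p$ gives the factor $1-\tfrac1p+\tfrac1{p^{2}}$. Hence the product reduces to
$$\sum_{\substack{n\le x\\ q\nmid\sigma(n)}}1\;\ll\;x\prod_{\substack{p\le x\\ p\equiv -1\bmod{q}}}\Bigl(1-\frac1p+\frac1{p^{2}}\Bigr).$$

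Finally I would take logarithms, using $\log\bigl(1-\tfrac1p+\tfrac1{p^{2}}\bigr)=-\tfrac1p+O(1/p^{2})$, so that the logarithm of the product is $-\sum_{p\le x,\,p\equiv -1\bmod{q}}\tfrac1p+O(1)$, the $O(1)$ absorbing the convergent tail $\sum_{p}1/p^{2}$. The crux, and the only genuinely delicate point, is the uniformity in $q$ of the Mertens-type sum over the progression $p\equiv -1\bmod{q}$: here the Siegel--Walfisz estimate is useless for large $q$, and instead I would invoke the uniform result of Norton \cite{Norton} and Pomerance \cite{Pomerance77} recorded in the Remarks after Lemma \ref{SiegelWalfisz}, which gives $\sum_{p\le x,\,p\equiv -1\bmod{q}}\tfrac1p=\frac{\log_{2}x}{\varphi(q)}+O(1)$ uniformly in $q$ (applicable since $\gcd(-1,q)=1$). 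Substituting yields $\prod_{p\le x,\,p\equiv-1\bmod q}(1-\tfrac1p+\tfrac1{p^{2}})\ll\exp\bigl(-\log_{2}x/\varphi(q)\bigr)=(\log x)^{-1/\varphi(q)}$, and therefore the claimed bound $\ll x/(\log x)^{1/\varphi(q)}$ uniformly in $q$. (When no prime $\le x$ lies in the progression the product is empty and the target bound holds trivially, since the uniform Mertens estimate then forces $\log_{2}x/\varphi(q)=O(1)$.)
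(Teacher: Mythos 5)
Your proof is correct. The paper quotes this lemma from \cite{PollackArithmeticProperties} without reproducing a proof, and your argument is essentially the original one: the observation that $q\nmid\sigma(n)$ forces each prime $p\equiv -1\bmod{q}$ to divide $n$ either not at all or to at least the second power, a $\{0,1\}$-valued multiplicative majorant estimated via Lemma \ref{Tenenbaumnormalorder}, and the uniform Mertens-type estimate of Norton \cite{Norton} and Pomerance \cite{Pomerance77} (rather than Siegel--Walfisz, which would destroy uniformity in $q$). This is also precisely the scheme the authors deploy for Lemma \ref{keylemma}, minus the Rankin weight $t^{\omega(a)}$, which is unnecessary here since there is no constraint on the number of prime factors; your handling of the local factors, the logarithmic expansion, and the empty-progression edge case are all sound.
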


Note that, by sacrificing the uniformity, we will be able to obtain a better bound for the number of positive integers $n\leq x $ such $g^k\nmid \sigma (n)$ when $g^k$ is a large integer.

To prepare for the proof of such a result we first prove the following observation. 
%{\bf KB: I should remark that all these lemmas and their proofs should be polished at some point.} 
\begin{lemma}\label{n=ab}
Let $m$ be a positive integer. Then every positive integer $n$ can be written uniquely as $n=ab$ with $\gcd(a,b)=1$ and 
\begin{align*}
    \mu^2(a)=1, \,\,\,p\mid a \text{ implies } p\equiv -1 \bmod {m} \quad \text{ and } \quad p\mid b \text{ implies } p^2\mid b \text{ or } p\not\equiv -1 \bmod {m}.
    \end{align*}
\end{lemma}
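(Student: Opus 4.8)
The plan is to read off the factorization prime by prime from the canonical factorization of $n$. Writing $v_p(n)$ for the exponent of the prime $p$ in $n$, I would define
$$a \coloneqq \prod_{\substack{p \mid n \\ v_p(n) = 1 \\ p \equiv -1 \bmod{m}}} p, \qquad b \coloneqq n/a,$$
so that $b$ carries the full prime power $p^{v_p(n)}$ of every prime $p \mid n$ that fails to satisfy both $v_p(n) = 1$ and $p \equiv -1 \bmod{m}$. Existence is then immediate to check: $a$ is a product of distinct primes, so $\mu^2(a) = 1$; by construction every prime dividing $a$ is $\equiv -1 \bmod{m}$ and appears to the first power in $n$, so $a$ and $b = n/a$ are coprime; and if $p \mid b$ then the defining condition of $a$ fails for $p$, meaning either $v_p(n) \geq 2$ --- whence $p^2 \mid b$ since the whole power $p^{v_p(n)}$ lands in $b$ --- or $p \not\equiv -1 \bmod{m}$, which is exactly the required dichotomy.

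For uniqueness, the key observation I would isolate is that any decomposition $n = ab$ meeting the four conditions must satisfy
$$p \mid a \iff v_p(n) = 1 \ \text{ and } \ p \equiv -1 \bmod{m}.$$
Indeed, since $\gcd(a,b) = 1$, each prime $p \mid n$ divides exactly one of $a$, $b$. If $p \mid a$, then $v_p(a) = v_p(n)$, and squarefreeness of $a$ forces $v_p(n) = 1$, while the hypothesis on $a$ gives $p \equiv -1 \bmod{m}$; this yields the forward implication. Conversely, if $v_p(n) = 1$ and $p \equiv -1 \bmod{m}$ but $p \mid b$, then the condition on $b$ would demand $p^2 \mid b$ (impossible, as $v_p(n) = 1$) or $p \not\equiv -1 \bmod{m}$ (contrary to hypothesis), so in fact $p \mid a$. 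This characterization pins down the prime support of $a$, and squarefreeness then determines $a$ completely as the product of exactly those primes; hence $b = n/a$ is forced as well.

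There is no serious obstacle here: the statement is a routine coprime splitting of the factorization of $n$, and the only point requiring genuine care is the bookkeeping of the two-sided implication in the uniqueness step, where one must invoke \emph{both} the squarefree constraint on $a$ and the ``$p^2 \mid b$ or $p \not\equiv -1 \bmod{m}$'' dichotomy on $b$ in order to exclude the wrong placement of each prime.
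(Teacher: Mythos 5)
Your proposal is correct and uses exactly the same decomposition as the paper: $a$ is the product of the primes dividing $n$ to the first power that are $\equiv -1 \bmod{m}$, and $b = n/a$. In fact your write-up is slightly more complete than the paper's, which constructs the decomposition and stops, whereas you also spell out the uniqueness argument by characterizing the prime support of $a$.
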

\begin{proof}
Let $n>1$ as the result is vacuously true when $n=1$.  Assume that $n$ has the following prime factorization: 
$$n=p_1^{e_1}p_2^{e_2}\dots p_{j}^{e_j},$$
 with $p_i\not= p_j$ if $i\not = j$. Changing the order if needed, without loss of generality assume that $p_1,\dots, p_J\equiv -1 \bmod{m}$ with $e_1=\dots = e_J=1$ and for $k=J+1,\dots, j$ either $p_k\not\equiv -1 \bmod{m}$ or $e_k>1$. Then, we choose 
$$a=p_1\dots p_J$$
and 
$$b=\frac{n}{a}=p_{J+1}^{e_{J+1}}\dots p_{j}^{e_j}.$$
This finishes the proof.
\end{proof}

{\bf Remark.} If we remove the condition that $\gcd(a,b)=1$ in Lemma \ref{n=ab}, then the decomposition $n=ab$ is not unique. For example, if $n$ has a prime divisor $q$ with $q\equiv -1\bmod m$ and $q^3\mid m$, then we can also choose 
$$a=q\prod_{\substack{{p\equiv {-1}\bmod m}\\ {p||n}}}
p \quad \text{ and }\quad  b=\frac{n}{a} .$$
 %We thank one of the referees for this observation.\\
We are now ready to prove our key lemma. 
%\begin{lemma}\label{bound for S_2}
%Let $g\geq 3$ be a given integer. Let $\gamma,\delta\in (0,1)$ and $A,B >0$ also be given. Then for integers 
%$k \in \left[A (\log_2 x )^\gamma, B(\log_2 x)^\gamma\right]$, we have 
%%%$$\sum_{\substack{{n\le x}\\ {g^k\nmid \sigma (n)}}}1\ll x\exp \left(-c\left(\log\log x\right)^\delta\right).$$
%$$\sum_{\substack{{n\le x}\\ {g^k\nmid \sigma (n)}}}1\ll x\exp \left(-\left(\log_2 x\right)^\delta\right), $$
%where the constant implied by the $\ll$ notation depends on the choices of $g,A,B,\gamma,\delta$.
%\end{lemma}

%{\bf Remark.} In this paper, we have presented a lemma sufficient for application to Theorem \ref{short theorem in intro}. In particular the base $g$ here is fixed. The idea of the proof could provide results valid for larger values of $g$ even if it is necessary to modify the range for the exponents $k$. 

\begin{proof}[Proof of Lemma \ref{keylemma}]
Let $\ell\le k$ be chosen later. By Lemma \ref{n=ab}, an integer $n$ can be written in a unique way as $n=ab$ with $\gcd(a,b)=1$, $\mu^2 (a)=1$ and $p\mid a$ implies $p\equiv -1\bmod{g^\ell}$ and $b$ such that 
$$p\mid b \Rightarrow p^2\mid b\ \text{or } p\not\equiv-1\bmod{g^\ell}.$$
Suppose that $n=ab$, written in the above form, is a positive integer such that $g^k\nmid\sigma (n)$. Then we claim that $n$ has at most $m\coloneqq \lfloor k/\ell\rfloor$ prime factors $p$ such that  $p\equiv -1\bmod{g^\ell}$  and $p^2\nmid n$.  Indeed, if $n$ has more than $k/\ell$ prime factors $p\equiv -1\bmod{g^\ell}$ with $p^2\nmid n$, say $p_1, p_2,\dots p_{m+1}$, then $\sigma(n)= (p_1+1)(p_2+1)\dots (p_{m+1}+1)K$, where $K$ is a positive integer. Since each $p_i \equiv -1\bmod{g^\ell}$, this implies $\sigma(n)= c_1g^\ell c_2g^\ell\dots c_{m+1}g^\ell K =c_1c_2\dots c_{m+1}Kg^{(m+1)\ell}$ with positive integers $c_i$. Since $(m+1)\ell\geq k$, this would imply that  $g^k\mid\sigma (n)$, which contradicts our assumption. Thus for such $n$, we have $\omega(a)\leq m$. Combining what we noted above, we obtain
$$\sum_{\substack{{n\le x}\\ {g^k\nmid \sigma (n)}}}1\le\sum_{\substack{{ab\le x}\\ {p\mid a\Rightarrow p\equiv -1\bmod{g^\ell}}\\ {\omega (a)\le m}\\
 {p\mid b\Rightarrow p^2\mid b \text{ or } p\not\equiv -1\bmod{g^\ell}}}}\mu^2 (a).$$

To find an upper bound for the sum on the right-hand side, we first deal with the condition $\omega (a)\le m.$ To do that, we use  Rankin's method, replacing $1$ with a nonnegative quantity that is at least $1$ when $\omega (a)\le m.$ Namely, let  $t\in (0,1)$  be a parameter so that
$t^{\omega (a)-m}>0$ for any positive integer $a$ and  $t^{\omega (a)-m}\geq 1$ if $\omega (a)\le m$. Then we get 
\begin{align*}
\sum_{\substack{{n\le x}\\ {g^k\nmid \sigma (n)}}}1&\le \sum_{\substack{{ab\le x}\\ {p\mid a\Rightarrow p\equiv -1\bmod{g^\ell}}\\ 
 {p\mid b\Rightarrow p^2\mid b \text{ or } p\not\equiv -1\bmod{g^\ell}}}}\mu^2 (a) t^{\omega (a)-m}\\
 &\le t^{-m}\sum_{\substack{{ab\le x}\\ {p\mid a\Rightarrow p\equiv -1\bmod{g^\ell}}\\ 
 {p\mid b\Rightarrow p^2\mid b \text{ or } p\not\equiv -1\bmod{g^\ell}}}}\mu^2 (a) t^{\omega (a)}.
 \end{align*}
%For arithmetic functions $g$ and $h$, the \textit{convolution function} $g\ast h$ is defined by 
%$$g*h(n)\coloneqq \sum_{ab=n}g(a)h(b), $$
%for any positive integer $n$.
We can rewrite the sum on the right in terms of multiplicative functions

$$\sum_{\substack{{ab\le x}\\ {p\mid a\Rightarrow p\equiv -1\bmod{g^\ell}}\\ 
 {p\mid b\Rightarrow p^2\mid b \text{ or } p\not\equiv -1\bmod{g^\ell}}}}\mu^2 (a) t^{\omega (a)} =\sum_{n\le x}\sum_{ab=n}G(a)H(b)=\sum_{n\le x} G\ast H(n),$$
 where 
 $G$ and $H$ are multiplicative functions defined by
 \begin{align*}
 G(p)\coloneqq\begin{cases} t, & \text{ if } p\equiv -1\bmod{g^\ell},\\ 
 0, & \text{ if } p\not\equiv -1\bmod{g^\ell},
 \end{cases}  %\text{ and } g(p^\nu)=0 \text{ for all }\nu\ge 2,
 \end{align*}
and  $G(p^\nu)\coloneqq0$ for all $\nu\ge 2$; and 
\begin{align*}
H(p)\coloneqq \begin{cases} 0, & \text{ if } p\equiv -1\bmod{g^\ell},\\
1, & \text{ if } p\not\equiv -1\bmod{g^\ell},
 \end{cases} %\text{ and }  h(p^\nu)=1 \text{ for all }\nu\ge 2.
\end{align*}
and  $H(p^\nu)\coloneqq 1$ for all $\nu\ge 2$.

Let $f=G\ast H$. The function $f$ is also multiplicative with 
$f(p)=G(p)+H(p)$, which gives
\begin{align*}
f(p)= \begin{cases} t, & \text{ if } p\equiv -1\bmod{g^\ell},\\
1, & \text{ otherwise.}
 \end{cases}
\end{align*}
For $\nu\ge 2$ we have
\begin{align*}
f\left(p^\nu\right) = \sum_{ab=p^\nu}G(a)H(b) =G(p)H(p^{\nu -1})+H\left(p^\nu\right)=\begin{cases} 1, &\text { if } \nu =2,\\
t+1, & \text { if } \nu\ge 3 \text{ and } p\equiv -1\bmod{g^\ell},\\
1, & \text{ if } \nu\ge 3 \text{ and } p\not\equiv -1\bmod{g^\ell}.
\end{cases}
\end{align*}
Now, we can apply  Lemma \ref{Tenenbaumnormalorder} with $\lambda _1 =2$ and $\lambda_2 =1$ to $f$ and obtain
\begin{small}
    \begin{align*}
\sum_{\substack{{n\le x}\\ {g^k\nmid \sigma (n)}}}1&\ll t^{-m}x\prod_{\substack{{p\le x}\\ {p\equiv -1\bmod{g^\ell}}}}\left( 1-\frac{1}{p}\right)\left( 1+\frac{t}{p}+\frac{1}{p^2}+(t+1)\sum_{\nu\ge 3}
\frac{1}{p^\nu}\right)\prod_{\substack{{p\le x}\\ {p\not\equiv -1\bmod{g^\ell}}}}\left( 1-\frac{1}{p}\right)
\sum_{\nu\ge 0}\frac{1}{p^\nu}.
\end{align*}
\end{small}
Next, we note that for any prime $p$, 
$$\left( 1-\frac{1}{p}\right)\sum_{\nu\ge 3}
\frac{1}{p^\nu}=\left( 1-\frac{1}{p}\right)\frac{1}{p^3}\sum_{\nu\ge 0}
\frac{1}{p^\nu}=\left( 1-\frac{1}{p}\right)\frac{1}{p^3}\frac{1}{(1-1/p)}=\frac{1}{p^3}$$
and
$$\left( 1-\frac{1}{p}\right)
\sum_{\nu\ge 0}\frac{1}{p^\nu} =\left( 1-\frac{1}{p}\right) \frac{1}{(1-1/p)}=1$$
which yield
\begin{align*}
\sum_{\substack{{n\le x}\\ {g^k\nmid \sigma (n)}}}1&\ll t^{-m}x\prod_{\substack{{p\le x}\\ {p\equiv -1\bmod{g^\ell}}}}\left( 1 +\frac{t-1}{p}+\frac{1-t}{p^2}+\frac{t}{p^3}\right).
\end{align*}
Furthermore, we see that
\begin{small}
\begin{align*}
 \prod_{\substack{{p\le x}\\ {p\equiv -1\bmod{g^\ell}}}}&\left( 1 +\frac{t-1}{p}+\frac{1-t}{p^2}+\frac{t}{p^3}\right)= \prod_{\substack{{p\le x}\\ {p\equiv -1\bmod{g^\ell}}}}\left( 1 +\frac{t-1}{p}\right)\left(1+\frac{1}{ 1+\frac{t-1}{p}}\frac{1-t}{p^2}+\frac{1}{ 1+\frac{t-1}{p}}\frac{t}{p^3}\right) %\\=&\prod_{\substack{{p\le x}\\ {p\equiv -1\bmod{g^\ell}}}}\left( 1 +\frac{t-1}{p}\right)\prod_{\substack{{p\le x}\\ {p\equiv -1\bmod{g^\ell}}}}\left(1+\frac{p}{ p+t-1}\frac{1-t}{p^2}+\frac{p}{p+t-1}\frac{t}{p^3}\right)
 \\=&\prod_{\substack{{p\le x}\\ {p\equiv -1\bmod{g^\ell}}}}\left( 1 +\frac{t-1}{p}\right)\prod_{\substack{{p\le x}\\ {p\equiv -1\bmod{g^\ell}}}}\left(1+\frac{1-t}{ p(p+t-1)}+\frac{t}{p^2(p+t-1)}\right) 
\end{align*}
\end{small}
where the second product is bounded by a constant. So we obtain
\begin{align*}
\sum_{\substack{{n\le x}\\ {g^k\nmid \sigma (n)}}}1&\ll t^{-m}x\prod_{\substack{{p\le x}\\ {p\equiv -1\bmod{g^\ell}}}}\left( 1 +\frac{t-1}{p}\right).
\end{align*}
In order to bound the product above, we  use the Taylor--Young formula for $\log(1-X)$ for $|X|\le 1/2$, and the Siegel--Walfisz Theorem, stated in Lemma \ref{SiegelWalfisz}. So, if $g^\ell$ is less than a power of $\log x$, we have uniformly for  $t\in (0,1)$,
\begin{align*}
\log \left( \prod_{\substack{{p\le x}\\ {p\equiv -1\bmod{g^\ell}}}}\left( 1 +\frac{t-1}{p}\right)\right)
=&\sum_{\substack{{p\le x}\\ {p\equiv -1\bmod{g^\ell}}}}\log \left( 1+\frac{t-1}{p}\right) \\
=&\sum_{\substack{{p\le x}\\ {p\equiv -1\bmod{g^\ell}}}}
\left (\frac{t-1}{p}+O\left (\frac{1}{p^2}\right )\right )\\
=& -(1-t)\sum_{\substack{{p\le x}\\ {p\equiv -1\bmod{g^\ell}}}} \frac{1}{p}
+O\left (\sum_{p\le x}\frac{1}{p^2}\right )\\
=& -\frac{(1-t)\log_2 x}{\varphi (g^\ell)}+O(1),
\end{align*}
where the last equality above is  deduced from the Siegel--Walfisz Theorem after partial summation, or more directly by the results of Norton  \cite{Norton} and Pomerance \cite{Pomerance77} mentioned just after Lemma \ref{SiegelWalfisz}.
Thus,  we obtain
$$\sum_{\substack{{n\le x}\\ {g^k\nmid \sigma (n)}}} 1\ll \frac{x}{t^{m}(\log x)^{(1-t)/\varphi (g^\ell)}}.$$
%for some absolute constant $\eta >0$.
It remains to choose $\ell$ and $t$. Recall that %$k \in \left[A (\log\log x )^\gamma, B(\log\log x)^\gamma\right]$
%$k \in \left[A (\log_2 x )^\gamma, B(\log_2 %x)^\gamma\right]$. 
$k \in \left[5 (\log_3 x ), A(\log_2 x)^\gamma\right]$. With our choices, we would like to have $t^{m}(\log x)^{(1-t)/\varphi (g^\ell)}\rightarrow \infty.$ 
%Let $\alpha,\alpha ',\beta \in\mathbb{R} $ such %that $0<\max (\gamma ,\alpha ')<\beta <\alpha %<1$ and $\delta < \alpha-\alpha '$. 

Let $\alpha,\alpha ' \in\mathbb{R} $ such that $0<\alpha ' <\alpha <1$ and $\gamma, \delta < \alpha-\alpha'$. We let %.\footnote{\textcolor{magenta}{CDa: Following a suggestion of referee 2, I have suppressed the $\beta$ but the previous version with the $\beta$ is still in LateX file and hidden with some \%}}
%\color{magenta}{For example, we can choose $\alpha =\frac{1+\max (\gamma ,\delta)}{2}$ and $\alpha' =\min (\frac{\gamma}{2}, \alpha -\delta-\frac{1-\delta}{4})$.}

%%%%$$\ell \coloneqq\left\lfloor(1-\alpha)\frac{\log \log\log x}{\log g}\right\rfloor,$$
$$\ell \coloneqq\left\lfloor(1-\alpha)\frac{\log_3 x}{\log g}\right\rfloor,$$
%%$$t\coloneqq 1-\frac{1}{(\log\log x)^{\alpha '}}$$
$$t\coloneqq 1-\frac{1}{(\log_2 x)^{\alpha '}}.$$
Note that we then have $\ell\le k$, $g^{\ell} \leq (\log_2 x)^{1-\alpha}$ and 
%%%$$m\le \frac{k}{\ell}\leq B \frac{(\log\log x)^\gamma}{(1-\alpha)\frac{\log \log\log x}{\log g}} \ll (\log\log x )^\beta.$$ 
$$ 1\le m\le \frac{k}{\ell}\leq A \frac{(\log_2 x)^\gamma}{\Big\lfloor(1-\alpha)\frac{\log_3 x}{\log g}\Big\rfloor} \leq 2A \frac{(\log_2 x)^\gamma}{(1-\alpha)\frac{\log_3 x}{\log g}} \le %(\log_2 x )^\beta
(\log_2 x)^\gamma$$
 for $x$ large enough.

 With these choices, we get 
%\begin{align*}
%t^{m}(\log x)^{(1-t)/\varphi (g^\ell)} \geq&\,\, t^{m}(\log x)^{(1-t)/g^\ell}\\\gg& \left(1-\frac{1}{(\log\log x)^{\alpha '}}\right)^{(\log\log x )^\beta}(\log x)^{\frac{1-%\frac{1}{(\log\log x)^{\alpha '}}}{(\log\log x)^{(1-\alpha)}}}
%\\ \gg & \exp\left((\log\log x )^\beta \log \left(1-\frac{1}{(\log\log x)^{\alpha '}}\right)+ \left(\frac{(\log\log x)^{\alpha '}-1}{(\log\log x)^{\alpha '+1-\alpha}} %\right)\log\log x\right)
%\\ \gg & \exp\left((\log\log x )^\beta \log \left(1-\frac{1}{(\log\log x)^{\alpha '}}\right)+ \left(\frac{(\log\log x)^{\alpha '}-1}{(\log\log x)^{\alpha '+1-\alpha}} \right) \log\log x\right).
%\end{align*}
%\begin{align*}
%t^{m}(\log x)^{(1-t)/\varphi (g^\ell)} \geq&\,\, t^{m}(\log x)^{(1-t)/g^\ell}\\\gg& \left(1-\frac{1}{(\log_2 x)^{\alpha '}}\right)^{(\log_2 x )^\beta}(\log x)^{\frac{1}{(\log_2 x)^{\alpha'+1-\alpha}}}
%\\ \gg & \exp\left((\log_2 x )^\beta \log \left(1-\frac{1}{(\log_2 x)^{\alpha '}}\right)+ \frac{\log_2 x}{(\log_2 x)^{\alpha '+1-\alpha}} \right)
%\\ \gg & \exp\left((\log_2 x )^\beta \log \left(1-\frac{1}{(\log_2 x)^{\alpha '}}\right)+ \frac{1}{(\log_2 x)^{\alpha '-\alpha}} \right).
%\\ \gg & \exp\left((\log\log x )^\beta \log \left(1-\frac{1}{(\log\log x)^{\alpha '}}\right)+ \left(\frac{(\log\log x)^{\alpha '}-1}{(\log\log x)^{\alpha '+1-\alpha}} \right) \log\log x\right).
%\end{align*}
\begin{align*}
t^{m}(\log x)^{(1-t)/\varphi (g^\ell)} \geq&\,\, t^{m}(\log x)^{(1-t)/g^\ell}\\\gg& \left(1-\frac{1}{(\log_2 x)^{\alpha '}}\right)^{(\log_2 x )^\gamma}(\log x)^{\frac{1}{(\log_2 x)^{\alpha'+1-\alpha}}}
\\ = & \exp\left((\log_2 x )^\gamma \log \left(1-\frac{1}{(\log_2 x)^{\alpha '}}\right)+ \frac{\log_2 x}{(\log_2 x)^{\alpha '+1-\alpha}} \right)
\\ =& \exp\left((\log_2 x )^\gamma \log \left(1-\frac{1}{(\log_2 x)^{\alpha '}}\right)+ \frac{1}{(\log_2 x)^{\alpha '-\alpha}} \right).
%\\ \gg & \exp\left((\log\log x )^\beta \log \left(1-\frac{1}{(\log\log x)^{\alpha '}}\right)+ \left(\frac{(\log\log x)^{\alpha '}-1}{(\log\log x)^{\alpha '+1-\alpha}} \right) \log\log x\right).
\end{align*}
%{\bf KB: I think we can talk about how much of these details we would like to be included in the final draft. I also added this to see if the argument follows through.  CDo: Is there a difference between the last 2 lines?}
%%%%%%%%%%
%%%%%%%%%%

Since $\gamma <\alpha-\alpha'$, the first term in the exponential above in absolute value is at most one half of the second.
% Hence  we find that for all $\varepsilon >0$ and $x\ge x_0$, %with $x_0=x_0(\varepsilon,\alpha,\alpha',\beta)$ we have
Hence for all $x\ge x_0$ with $x_0=x_0 (\alpha ,\alpha ',\gamma )$
we have
 %%$$\sum_{\substack{{n\le x}\\ {g^k\nmid \sigma (n)}}} 1 \ll x\exp \left( (1-\varepsilon)(\log\log x)^{\alpha -\alpha '}\right) \ll x\exp \left(-(\log\log x)^\delta\right),$$
 $$\sum_{\substack{{n\le x}\\ {g^k\nmid \sigma (n)}}} 1 \ll x\exp \left( -\frac{1}{2}(\log_2 x)^{\alpha -\alpha '}\right) \ll x\exp \left(-(\log_2 x)^\delta\right),$$
% This choice allows us to take 
 %$k=(\log\log x)^\beta (\log\log\log x)^{1-\alpha}$ for all $0<\beta <\alpha <1$, 
 where we have used the condition $\delta <\alpha -\alpha'$. This
finishes the proof.
\end{proof}

\section{Proof of Theorem \ref{short theorem in intro}}

Recall our basic setup: We have $g\in\N$, $g\geq 3$, $\mathcal{D} \subsetneq \{0,1,\dots,g-1\}$ nonempty, $x$ sufficiently large, and $0<\gamma<1$.  For a positive integer 
%$k \in \left[(\log\log x )^\gamma, A(\log\log x)^\gamma\right]$ 
$k \in \left[\frac{(\log_2 x )^\gamma}{\log (g/|\mathcal{D}|)}, 2\frac{(\log_2 x)^\gamma}{\log (g/|\mathcal{D}|)}\right]$,  we write
\begin{align*}
	\#\mathcal{W}_{s,\mathcal{D}}(x) = \sum_{\substack{n\in\mathcal{W}_{s,\mathcal{D}}(x) \\ \sigma(n)\equiv 0\bmod{g^k}}} 1 + \sum_{\substack{n\in\mathcal{W}_{s,\mathcal{D}} (x)\\ \sigma(n)\not\equiv 0\bmod{g^k}}} 1 \eqqcolon S_1 +S_2.
\end{align*}

We will now work on the  upper bounds for $S_1$ and $S_2$ separately. To start with, we use a rather weak bound for $S_2$, where we drop the condition on the digit restriction on $s(n)$. Then we apply Lemma  \ref{keylemma}  with $\gamma=\delta$ to obtain
\begin{align*}
	%%S_2 \leq \sum_{\substack{n\leq x\\ g^k \nmid \sigma(n)}} 1 = O\left( x\exp \left(-c\left(\log\log x\right)^\gamma\right)\right).
    S_2 \leq \sum_{\substack{n\leq x\\ g^k \nmid \sigma(n)}} 1 = O\left( x\exp \left(-\left(\log_2 x\right)^\gamma\right)\right).
\end{align*}

Next, we focus on finding an upper bound for $S_1$. Following a similar setting as in  \cite{PollackPalindromes}, for $s(n)=\sum_{j=0 }^{N}\varepsilon_j (s(n))g^j$, for some $N\geq 1$, we put
\begin{align*}
	B \coloneqq \sum_{j= 0}^{k-1}\varepsilon_j(s(n)) g^j 
\end{align*}
as the number formed by the $k$-rightmost digits of $s(n)$ such that $ s(n) \equiv B\bmod{g^k}$. Note that this implies $B\leq g^{k}-1.$
Let $n\in\mathcal{W}_{s,\mathcal{D}}(x)$ with $\sigma(n)\equiv 0\bmod{g^k}$. Then, since $g^k\mid \sigma(n)$, we have 
\begin{align*}
n= \sigma(n)-s(n) \equiv -s(n) \equiv -B\bmod{g^k}.
\end{align*}
So we can relax the condition  on $S_1$ with a congruence condition as follows
\begin{align*}
	S_1 = \sum_{\substack{n\in\mathcal{W}_{s,\mathcal{D}}(x) \\ \sigma(n)\equiv 0\bmod{g^k}}} 1  \leq \sum_{\substack{n\leq x\\ n\equiv -B\bmod{g^k} \\ B\in \mathcal{W}_{ \mathcal{D}}(g^{k}-1)}} 1 = \sum_{B\in \mathcal{W}_{ \mathcal{D}}(g^{k}-1)} \sum_{\substack{n\leq x\\ n\equiv -B\bmod{g^k}}} 1 \leq |\mathcal{D}|^k \Big\lfloor\frac{x}{g^k}\Big\rfloor+ |\mathcal{D}|^k,
\end{align*}
where in the last inequality we used, for $M$ a positive integer, $b\in\Z$ and $X\geq M$, that
$$\#\{1\leq a\leq X:\,a\equiv b\bmod{M} \}\leq \lfloor X/M\rfloor+1 $$
along with (\ref{number of elements of WD}).

Inserting our choice of $k$ yields
%%%$$S_1\ll x  \exp \left( -k \log \left(g/|\mathcal{D}|\right)\right) \ll x\exp \left(-\log \left(g/|\mathcal{D}|\right) (\log\log x)^\gamma\right).$$
$$S_1\ll x  \exp \left( -k \log \left(g/|\mathcal{D}|\right)\right) \ll x\exp \left(- (\log_2 x)^\gamma\right).$$

%{\bf GC: Shouldn't it be $x\exp (-\log (g/|\mathcal{D}|) (\log\log x)^\gamma)$ on the right side? The $\gamma$ should only be the exponent for the $\log\log(x)$, right? KB: Yes, fixed this.}\\

Thus, overall we obtain the following upper bound
%%%$$ \#\mathcal{W}_{s,\mathcal{D}}(x) =\#s^{-1}\left(\mathcal{W}_{\mathcal{D}}(x)\right)\ll x\exp (-c(\log\log x)^\gamma).$$
$$ \#\mathcal{W}_{s,\mathcal{D}}(x) =\#s^{-1}\left(\mathcal{W}_{\mathcal{D}}\right)(x)\ll x\exp (-(\log_2 x)^\gamma)$$    
as desired.\qed

\section{Some remarks on a lower bound on $ \#\mathcal{W}_{s,\mathcal{D}}(x)$} 

As indicated earlier, as soon as we have $1\in\mathcal{D}$, then the elements in $\mathcal{W}_{s,\mathcal{D}}$ are at least as frequent as the primes, since $s(p)=1$ for all primes $p$. In the case when ${\mathcal D}=\{ 0,\ldots ,g-1\}\setminus\{ a_0\}$ for some $a_0\in\{ 1,\ldots ,g-1\}$  and $g$ large enough, we can prove that $s(n)$ takes on infinitely many different values in $\mathcal{W}_{\mathcal{D}}$ by adapting the ideas used in \cite{Maynard22}.

As already remarked in the introduction, if $p$ and $q$ are two distinct primes then $s(pq)=p+q+1$.
It is thus sufficient to prove that a positive proportion of ellipsephic integers can be expressed as $1$ plus a sum of two primes. The arguments of \cite[Sections 6-8]{Maynard22}  imply that 
\begin{align}\label{sumof2primes}
\sum_{n\in \mathcal{W}_{\mathcal{D}}(g^N-1)}\sum_{p+q+1=n}\log p\log q = (c(\mathcal{D})+o(1))(g|\mathcal{D}|)^N,
\end{align}
with 

	\makeatletter
	\renewcommand{\maketag@@@}[1]{\hbox{\m@th\normalsize\normalfont#1}}
	\makeatother
 
\begin{footnotesize}
    \begin{align}\label{c(D)}
        c(\mathcal{D})\coloneqq\frac{g}{\varphi (g)^2}\#\left\{ (b_1,b_2)\in\{ 0,\ldots ,g-1\}^2 : \gcd(b_1b_2, g)=1 \text{ and } \exists \ d\in\mathcal{D} \text{ such that } b_1+b_2+1\equiv d\bmod{g}\right\} .
    \end{align}
\end{footnotesize}
When $g$ is large enough, $c(\mathcal{D} )>0$. Indeed, the set on the right-hand side  of \eqref{c(D)} is non-empty. It contains at least
$(b_1,b_2)=(1,g-1)$ if $a_0\not =1$, and   $(b_1,b_2)=(1,1)$ in the case $a_0=1$ .

The proof of \eqref{sumof2primes} consists of reproducing Sections 6-8 in \cite{Maynard22}, with one additional prime variable which we handle trivially. The only small difference is in the computation of the main term coming from the major arcs at the end of the proof of \cite[Lemma 7.2]{Maynard22}.

Let $r(n)$ denote the weight 
$$r(n)=\sum_{p_1+p_2+1=n}\log p_1\log p_2.$$

%Then for any odd integer $n$ with  $\sqrt{x}\le n\le x$,
We can apply an upper bound sieve to detect the primes $p_1$ such that $n-1-p_1$ is also prime.
By \cite[Theorem 3.11]{HalberstamRichert} and the remark that appears afterwards,
for any odd integer  $n\in [\sqrt{x}, x]$ we have that
\begin{align}\label{maj:r(n)}
    r(n)\ll (\log x)^2\#\{ p\le n,\ n-1-p=p'\} \ll x\prod_{\substack{{p> 2}\\ {p| n-1}}}\frac{p-1}{p-2}\le c_1x\log\log x,
\end{align}
%\begin{align}\label{maj:r(n)}
%r(n)\ll ( \log x )^2\sum_{\substack{{p_1\le x}\\ {p| (n-1-p_1)\Rightarrow p %>x^{1/100}}}}1\le c_1 x,
%\end{align}
for some absolute $c_1>0$. 
By \eqref{sumof2primes} and \eqref{maj:r(n)} we deduce that 
\begin{align}\label{preimages}
%\#\mathcal{W}_{s,\mathcal{D}}(x)\geq
\#\left\{n\in\mathcal{W}_{\mathcal{D}}(x): \ n-1 \text{ is a sum of two primes}\right\}
\geq \frac{c(\mathcal{D})-o(1)}{c_1\log\log x}\#\mathcal{W}_{\mathcal{D}}\left(x\right).
\end{align}
This implies that $s(n)$ takes infinitely many different values in 
$\mathcal{W}_{\mathcal{D}}$.
%{\bf CD: previously there was two inequalities in\eqref{preimages}. I have supressed the first part 
%\[
%\#\mathcal{W}_{s,\mathcal{D}}(x)\geq
%\left|\left\{n\in\mathcal{W}_{\mathcal{D}}(x): \ n-1 \text{ is sum of two primes}\right\} \right|
%,\]
%because we already know that if $1\in\mathcal{D}$,
%$\#\mathcal{W}_{s,\mathcal{D}}(x)\geq x/\geq \log x$ %which is a better lower bound than \eqref{preimages}.
%}
In this short section, we  wanted to to highlight a result which is a direct consequence of the proofs presented in \cite{Maynard22}. However, detecting ellipsephic primes is a considerably more difficult problem than detecting ellipsephic integers $n$ such that $n-1$ is a sum of two primes.
It is thus certainly possible to have a more precise lower bound than \eqref{preimages} for more general sets $\mathcal{W}_{\mathcal{D}}$  and with small base $g$. Also,  we can probably remove the $\log\log x$ in the denominator in \eqref{preimages} by combining the arguments in \cite[Section 3.2]{Vaughan}  with \cite{Maynard22}.

\section*{Acknowledgements}
We would like to thank the Women In Numbers Europe 4 conference organizers for providing us with the opportunity to work together. We would also like to thank the referees for their helpful comments, which greatly improved this paper. The first and third authors were supported by ANR grant ANR-20-CE91-0006  ArithRand. The first author was also supported by a PIMS postdoctoral fellowship at the University of Lethbridge. The last author was supported by the Max Planck Institute for Mathematics during the early stages of this project.

%\newpage
\begin{bibsection}
	\begin{biblist}

    \bibitem{Aloui15}  
     K. Aloui, \emph{Sur les entiers ellipséphiques: somme des chiffres et répartition dans les classes de congruence}, (French), Period. Math. Hungar. {\bf 70} (2015), no. 2, 171--208.

     \bibitem{BakerZhao}
     R. C. Baker and L. Zhao, \emph{ Gaps of smallest possible order between primes in an arithmetic progression}, Int. Math. Res. Not. IMRN 23 (2016), p. 7341-7368.

    \bibitem{BanksShparlinski2004} 
    W. D. Banks and I. E. Shparlinski, \emph{Arithmetic properties of numbers with restricted digits}, Acta Arith. {\bf 112} (2004), no. 4, 313--332.

    \bibitem{Biggs21}
    K. D. Biggs, \emph{Efficient congruencing in ellipsephic sets: the quadratic case}, Acta Arith. {\bf 200} (2021), no. 4, 331–348.

    \bibitem{Biggs23}
    \leavevmode\vrule height 2pt depth -1.6pt width 23pt, \emph{Efficient congruencing in ellipsephic sets: the general case}, Int. J. Number Theory {\bf 19} (2023), no. 1, 169--197.

    %\bibitem{Bourgain2015} 
    %J. Bourgain, \emph{Prescribing the binary digits of primes, II}, Israel J. Math. {\bf 206} (2015), no. 1, 165--182.

    %\bibitem{Col2006}
    %S. Col, \emph{Propri\'et\'es multiplicatives d’entiers soumis \`a des conditions digitales}, (French), Ph.D. thesis, Universit\'e Henri Poincar\'e, 2006.
     
     \bibitem{Col2009}
    S. Col, \emph{Diviseurs des nombres ellips\'ephiques}, (French), Period. Math. Hungar. {\bf 58} (2009), no. 1, 1--23.

    \bibitem{DartygeMauduit2000}  
    C. Dartyge and C. Mauduit, \emph{Nombres presque premiers dont l’écriture en base $r$ ne comporte pas certains chiffres}, (French), J. Number Theory {\bf 81} (2000), no. 2, 270--291.

    \bibitem{DartygeMauduit2001} 
    \leavevmode\vrule height 2pt depth -1.6pt width 23pt, \emph{Ensembles de densité nulle contenant des entiers possédant auplus deux facteurs premiers}, (French), J. Number Theory {\bf 91} (2001), no. 2, 230--255.

     \bibitem{Elliott}
    P. D. T. A. Elliott, \emph{Primes in progressions to moduli with a large power factor}, The Ramanujan Journal {\bf 13}, (2007), 241-251.

    \bibitem{Erdos73}
    P. Erd\H{o}s, \emph{\"{U}ber die Zahlen der Form $\sigma(n)-n$ und $n-\varphi(n)$}, (German), Elem. Math. {\bf 28} (1973), 83--86.

    \bibitem{egps}
    P. Erd\H{o}s, A. Granville, C. Pomerance, and C. Spiro, \emph{On the normal behavior of the iterates of some arithmetic functions}, Analytic number theory (Allerton Park, IL, 1989), 165--204, Progr. Math. {\bf 85}, Birkhäuser Boston, Boston, MA, 1990.

    \bibitem{ErdosMauduitSarkozy98} 
    P. Erd\H{o}s, C. Mauduit, and A. Sárközy, \emph{On arithmetic properties of integers with missing digits. I. Distribution in residue classes}, J. Number Theory {\bf 70} (1998), no. 2, 99--120.

    \bibitem{ErdosMauduitSarkozy99}  
    \leavevmode\vrule height 2pt depth -1.6pt width 23pt, \emph{On arithmetic properties of integers with missing digits. II. Prime factors}, Paul Erd\H{o}s memorial collection, Discrete Math. {\bf 200} (1999), no. 1-3, 149--164.

    %\bibitem{KobayashiPollack}
    %M. Kobayashi and P. Pollack, \emph{The error term in the count of the abundant numbers}, Mathematika \textbf{60} (2014), no. 1, 43--65. 

    \bibitem{HalberstamRichert}
    H. Halberstam and H.-E. Richert, \emph{Sieve methods}, London Mathematical Society Monographs, No. 4. Academic Press [Harcourt Brace Jovanovich, Publishers], London-New York, 1974, {\rm xiv}+364 pp.

    \bibitem{Konyagin2001}
    S. Konyagin, \emph{Arithmetic properties of integers with missing digits: distribution in residue classes}, Period. Math. Hungar. \textbf{42} (2001), no. 1-2, 145–162.

     \bibitem{KonyaginMauduitSarkozy}
     S. Konyagin, C. Mauduit, and A. Sárközy, \emph{On the number of prime factors of integers characterized by digit properties}, Period. Math. Hungar. \textbf{40} (2000), no. 1, 37--52.

    \bibitem{LebowitzLockardPollackRoy}
    N. Lebowitz-Lockard, P. Pollack, and A. Singha Roy, \emph{Distribution mod $p$ of Euler's totient function and the sum of proper divisors}, Michigan Math. J., to appear, (2023).

    %\bibitem{LucaMoodley20}
    %F. Luca and D. Moodley, \emph{Composite positive integers whose sum of prime factors is prime}, Arch. Math. (Brno) {\bf 56} (2020), no. 1, 49--64.

    \bibitem{Maynard19}
    J. Maynard, \emph{Primes with restricted digits}, Invent. Math. \textbf{217} (2019), no. 1, 127--218.

    \bibitem{Maynard22}  
    \leavevmode\vrule height 2pt depth -1.6pt width 23pt, \emph{Primes and polynomials with restricted digits}, Int. Math. Res. Not.  {\bf 14}  (2022), 1--23.
      
    \bibitem{MontgomeryVaughan75}
    H. L. Montgomery and R. C. Vaughan, \emph{The exceptional set in Goldbach’s problem}, Acta Arith. {\bf 27} (1975), 353--370.

     \bibitem{Norton}
     K. K. Norton, \emph{On the number of restricted prime factors of an integer. I}
     Illinois J. Math. 20(4): 681-705 (December 1976).

    \bibitem{Pintz} 
    J. Pintz, \emph{A new explicit formula in the additive theory of primes
with applications, II. The exceptional set for the Goldbach problems},
arXiv: 1804.09084v2
    
    \bibitem{PollackPalindromes} 
    P. Pollack, \emph{Palindromic sums of proper divisors}, Integers {\bf 15A} (2015), Paper No. A13, 12 pp.
	
	\bibitem{PollackArithmeticProperties} 
    \leavevmode\vrule height 2pt depth -1.6pt width 23pt, \emph{Some arithmetic properties of the sum of proper divisors and the sum of prime divisors}, Illinois J. Math. \textbf{58} (2014), no. 1, 125--147.
	
	\bibitem{PollackPomeranceThompson} 
    P. Pollack, C. Pomerance, and L. Thompson, \emph{Divisor-sum fibers}, Mathematika \textbf{64} (2018), no. 2, 330--342. 

    \bibitem{PollackRoyPowerfree}
    P. Pollack and A. Singha Roy, \emph{Powerfree sums of proper divisors}, Colloq. Math. {\bf 168} (2022), no. 2, 287--295.

    \bibitem{PollackTroupeErdosKacsn}
    P. Pollack, L. Troupe, \emph{Sums of proper divisors follow the Erd\H os-Kac law}, Proc. Amer. Math. Soc. {\bf 151} (2023), 977--988.
    
    \bibitem{Pomerance77}
    C. Pomerance, \emph{On the distribution of amicable numbers}, J. Reine Angew. Math. {\bf 293} (1977), 217--222. 

    \bibitem{TenenbaumITAN22}
    G. Tenenbaum, \emph{Introduction to analytic and probabilistic number theory}, Third edition, Translated from the 2008 French edition by Patrick D. F. Ion. Graduate Studies in Mathematics, 163, American Mathematical Society, Providence, RI, 2015, xxiv+629 pp.
 
    \bibitem{Troupeprimefactors}
    L. Troupe, \emph{On the number of prime factors of values of the sum-of-proper-divisors function}, J. Number Theory {\bf 150} (2015), 120--135.

    \bibitem{TroupeSumofsquares}
    \leavevmode\vrule height 2pt depth -1.6pt width 23pt, \emph{Divisor sums representable as a sum of two squares}, Proc. Amer. Math. Soc. \textbf{148} (2020), no. 10, 4189--4202.

     \bibitem{Vaughan} 
     R. C. Vaughan, \emph{ The Hardy-Littlewood method}, second edition, Cambridge Tracts in Mathematics, 125, Cambridge University Press, Cambridge, 1997, {\rm xiv}+232 pp.

    \bibitem{WatsonRamanujan}
    G. N. Watson, \emph{\"Uber Ramanujansche Kongruenzeigenschaften der Zerf\"allungszahlen. (I)}, (German), Math. Z. {\bf 39} (1935), no. 1, 712--731.

	\end{biblist}
\end{bibsection}

\end{document}